\documentclass[11pt,twoside]{article}
\usepackage{mathrsfs}
\usepackage{amsmath,amssymb}
\usepackage{amsfonts}
\usepackage{latexsym,bm}
\usepackage{amsthm}
\usepackage{amssymb,amscd}
\usepackage{amsbsy}
\usepackage{fancyhdr,graphicx}
\usepackage[dvips]{psfrag}
\usepackage{indentfirst}
\usepackage{graphics,color}
\usepackage{epsfig}
\usepackage{subfigure}
\usepackage{xcolor}

\textwidth=140mm
\textheight=208mm

\pagestyle{fancy} \fancyhead{} \fancyfoot{}
\fancyhead[CO]{\small\textsc {X. Cen, S. Li and Y. Zhao}}
\fancyhead[CE]{\small\textsc{limit cycles for a class of discontinuous quadratic differential systems}} \fancyfoot[CO,CE]{\thepage}

\numberwithin{equation}{section}

\newtheorem {theorem} {Theorem}
\newtheorem {proposition} [theorem]{Proposition}

\newtheorem {lemma}  [theorem]{Lemma}

\theoremstyle{remark}

\theoremstyle{definition}

\newtheorem {remark} [theorem]{Remark}

\begin{document}
\setlength{\parindent}{4ex}
\setlength{\parskip}{1ex}
\setlength{\oddsidemargin}{12mm}
\setlength{\evensidemargin}{9mm}

\title
{\textsc {On the number of limit cycles for a class of discontinuous quadratic differential systems}}
\begin{figure}[b]
\rule[-0.5ex]{7cm}{0.2pt}\\
\footnotesize $^{*}$Corresponding author. E-mail address:
cenxiuli2010@163.com (X. Cen); lism1983@126.com (S. Li); mcszyl@mail.sysu.edu.cn (Y. Zhao).
\end{figure}
\author
{{\textsc {Xiuli Cen$^{a,*}$, Shimin Li$^{b}$, Yulin Zhao$^{c}$}}\\[2ex]
{\footnotesize\it $^{a}$ Department of Mathematical Sciences, Tsinghua University, Beijing, 100084, P.R.China}\\
{\footnotesize\it $^{b}$ School of Mathematics and Statistics, Guangdong University of Finance and Economics,}\\
{\footnotesize\it Guangzhou, 510320, P.R.China}\\
{\footnotesize\it $^{c}$ Department of Mathematics, Sun Yat-sen University, Guangzhou, 510275, P.R.China}}
\date{}
\maketitle {\narrower \small \noindent {\bf Abstract\,\,\,} {The present paper is devoted to the study of
the maximum number of limit cycles bifurcated from the periodic orbits of the quadratic isochronous center
$\dot{x}=-y+\frac{16}{3}x^{2}-\frac{4}{3}y^{2},\dot{y}=x+\frac{8}{3}xy$ by the averaging method of first order, when it is perturbed inside a class of discontinuous quadratic polynomial differential systems. The \emph{Chebyshev
criterion} is used to show that this maximum number is 5 and can be realizable. The result and that in paper \cite{LC} completely answer the questions left in the paper \cite{LM}.}

Mathematics Subject Classification: Primary 34A36, 34C07, 37G15.}

Keywords: Limit cycle; Discontinuous differential system;
Averaging method; Isochronous center; Chebyshev
criterion.

\section{Introduction and statement of the main result}
It is well known that one of the important
open problems in the qualitative theory of real planar differential systems is the study of limit cycles.
For about one century,  scholars focus on the bifurcation of limit cycles in the continuous planar polynomial differential systems, see \cite{BL,CGP,CJ,GI,GV,LLT,LLLZ,MV,SV,Z} and the references therein. Nevertheless, it is still open even for the quadratic cases. In recent years, stimulated by the discontinuous phenomena in the real world, a great interest in the limit cycles of discontinuous planar polynomial differential systems has emerged, see for instance \cite{LC,LM,LNT} etc.

Recall that Loud first classified the quadratic polynomial differential system with an isochronous center into four kinds in \cite{L}:
\begin{equation*}\begin{array}{ll}
S_{1}:\dot{x}=-y+x^{2}-y^{2},\quad \dot{y}=x+2xy,\\[2ex]
S_{2}:\dot{x}=-y+x^{2},\quad \dot{y}=x+xy,\\[2ex]
S_{3}:\dot{x}=-y-\dfrac{4}{3}x^{2},\quad \dot{y}=x-\dfrac{16}{3}xy,\\[2ex]
S_{4}:\dot{x}=-y+\dfrac{16}{3}x^{2}-\dfrac{4}{3}y^{2},\quad
\dot{y}=x+\dfrac{8}{3}xy.
\end{array}\end{equation*}
Chicone and Jacobs proved that in \cite{CJ}, under all continuous quadratic polynomial perturbations, at most 1 limit cycle bifurcates from the periodic orbits of $S_1$, and at most 2 limit cycles
bifurcate from the periodic orbits of $S_2,S_3$ and $S_4$.
Llibre and Mereu studied the number of limit cycles bifurcated from the periodic orbits of quadratic isochronous centers $S_1$
and $S_2$ by the averaging method of first order, when they are perturbed inside a class of piecewise smooth quadratic polynomial systems \cite{LM}. They found that
at least 4 limit cycles can bifurcate from the periodic orbits of $S_1$, and at least 5 limit cycles can bifurcate from the periodic orbits of $S_2$, both of which show that the discontinuous systems have more limit cycles than the continuous ones.
Li and Cen obtained in \cite{LC} that there are at most 4 limit cycles bifurcating from the periodic orbits of $S_3$ by the averaging method of first order and \emph{Chebyshev criterion}.

In the present paper, we study the perturbations of quadratic isochronous center $S_{4}$:
\begin{equation}
\left(\begin{array}{ll}\dot{x}\\[2ex] \dot{y}\end{array}\right)=\label{eq1}\left\{\begin{array}{ll}
\left(\begin{array}{ll}-y+\dfrac{16}{3}x^{2}-\dfrac{4}{3}y^{2}+\varepsilon P_{1}(x,y)\\
x+\dfrac{8}{3}xy+\varepsilon Q_{1}(x,y)\end{array}\right), & \mbox{ $x> 0$,}
\\[2ex]
\left(\begin{array}{ll}-y+\dfrac{16}{3}x^{2}-\dfrac{4}{3}y^{2}+\varepsilon P_{2}(x,y)\\
x+\dfrac{8}{3}xy+\varepsilon Q_{2}(x,y)\end{array}\right), & \mbox{ $x< 0$,}
\end{array} \right.
\end{equation}\label{S4}
where $0<|\varepsilon|\ll1$ and $P_{i}(x,y),Q_{i}(x,y),i=1,2$ are quadratic polynomials in the variables $x$ and $y$, given by
\begin{equation}\label{PQ}\begin{split}
P_{1}(x,y)&=a_{10}x+a_{01}y+a_{20}x^{2}+a_{11}xy+a_{02}y^{2},\\[2ex]
Q_{1}(x,y)&=b_{10}x+b_{01}y+b_{20}x^{2}+b_{11}xy+b_{02}y^{2},\\[2ex]
P_{2}(x,y)&=c_{10}x+c_{01}y+c_{20}x^{2}+c_{11}xy+c_{02}y^{2},\\[2ex]
Q_{2}(x,y)&=d_{10}x+d_{01}y+d_{20}x^{2}+d_{11}xy+d_{02}y^{2}.
\end{split}\end{equation}

The object of this paper is to give the least upper bound of the number of limit cycles of system \eqref{S4} which bifurcate from the periodic orbits of quadratic isochronous center $S_{4}$. The techniques we use mainly include the averaging theory of first order and \emph{Chebyshev criterion}. To apply the averaging method, we also generalize a theorem (see Theorem \ref{th:BL} or \cite{BL}) which provides an approach to transform the  polynomial differential system to a normal form, see Theorem \ref{th:G}. It is worth noting that, we use some skills in dealing with the averaged function obtained,  which is a linear combination of various elementary functions, such as square root functions and logarithmic function, and the first and second complete elliptic integrals. It is challenging
to obtain the sharp upper bound of the number of zeros of the averaged function, that is equivalent to the number of limit cycles of system \eqref{S4} bifurcated from the periodic orbits of quadratic isochronous center $S_{4}$. Inspired by the idea of \cite{GI}, we eliminate the logarithmic function first and do not cause the increase in the number of zeros. Using some appropriate transformations and \emph{Chebyshev criterion}, we give the result as follows by qualitative analysis and algebraic calculations.
\begin{theorem}\label{th:S4}
The maximum number of limit cycles of discontinuous quadratic polynomial differential system \eqref{S4} which bifurcate from the periodic orbits of the quadratic isochronous center $\eqref{S4}|_{\varepsilon=0}$ (i.e., $S_4$)
is 5 by the averaging method of first order.
\end{theorem}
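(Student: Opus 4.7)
The plan is to implement the first-order averaging method for the discontinuous system \eqref{S4} and then analyse the resulting averaged function via the Chebyshev criterion. First, I would apply Theorem~\ref{th:G} to bring the unperturbed system $S_{4}$ into a suitable normal form on its periodic annulus, with the periodic orbits parametrised by a scalar $h$ ranging over an open interval. Because the discontinuity of \eqref{S4} lies on $\{x=0\}$, which corresponds in the new coordinates to two angular values $\theta_{\pm}$, the first-order averaged function takes the discontinuous form
\[
M(h)=\int_{\theta_{-}}^{\theta_{+}}F_{1}(h,\theta)\,d\theta+\int_{\theta_{+}}^{\theta_{-}+2\pi}F_{2}(h,\theta)\,d\theta,
\]
where $F_{i}$ is built from $(P_{i},Q_{i})$. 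Each simple zero of $M$ inside the parameter interval produces one hyperbolic limit cycle of \eqref{S4} bifurcating from the corresponding unperturbed orbit, so the theorem is equivalent to showing that $M$ has at most $5$ simple zeros and that this bound is attained.

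Second, I would carry out the integrations explicitly. Since $P_{i},Q_{i}$ are quadratic polynomials and the change of variables of Theorem~\ref{th:G} is algebraic, the integrals evaluate to a finite linear combination
\[
M(h)=\sum_{j}\alpha_{j}\,\phi_{j}(h),
\]
in which the generators $\phi_{j}$ consist of rational and square-root functions of $h$, a logarithmic term, and the complete elliptic integrals $K$ and $E$ of a modulus $k=k(h)\in(0,1)$. The coefficients $\alpha_{j}$ are linear in the twenty perturbation parameters of \eqref{PQ}, and a dimension count should leave only finitely many linearly independent ones, the sharp bound of $5$ being tied to the fact that the reduced system below will involve exactly six independent generators.

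Third, to bound the zeros of $M$ I would follow the strategy of \cite{GI} and first eliminate the logarithmic generator without introducing new zeros, via a suitable derivation-and-division argument. After an appropriate change of variable to the elliptic modulus $k$, what remains is an expression $\widetilde M(k)=\sum_{j=0}^{5}\beta_{j}\psi_{j}(k)$ involving only algebraic functions of $k$ together with $K(k)$ and $E(k)$. The Chebyshev criterion then reduces the upper bound to the verification that $\{\psi_{0},\dots,\psi_{5}\}$ forms an ECT-system on $(0,1)$.

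The main obstacle is this ECT verification. The associated Wronskians $W_{0},W_{1},\dots,W_{5}$ must be shown to have constant sign on $(0,1)$, which is delicate because of the presence of $K$ and $E$. The approach would be to exploit the Picard--Fuchs equations together with the Legendre relation to rewrite each Wronskian, after factoring out explicit algebraic weights, as a polynomial (or algebraic function) in $k$ whose positivity on $(0,1)$ can be checked by elementary means such as Sturm sequences or discriminant analysis. Once the ECT property is in place the upper bound of $5$ follows, and sharpness is obtained by selecting specific values of the twenty perturbation parameters so that the resulting $(\beta_{0},\dots,\beta_{5})$ produces a $\widetilde M$ with $5$ simple zeros in $(0,1)$; such a choice is possible because the linear map from perturbations to $(\beta_{0},\dots,\beta_{5})$ will turn out to be surjective, and the ECT property guarantees the attainability of the extremal zero configuration.
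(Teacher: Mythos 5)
Your overall architecture matches the paper's: normal form via Theorem~\ref{th:G}, an averaged function that is a linear combination of six independent generators (rational, square-root, logarithmic, and the elliptic integrals $I(r)=2\sqrt{1+r}\,E\bigl(\tfrac{2r}{1+r}\bigr)$, $J(r)=2K\bigl(\tfrac{2r}{1+r}\bigr)/\sqrt{1+r}$), elimination of the logarithm by a derivation-and-division step in the spirit of Gavrilov--Iliev, and then the Chebyshev criterion applied to the resulting six functions, with realizability of $5$ zeros coming from the ECT property together with the independence of the coefficients. Up to that point the proposal is sound.

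The genuine gap is in your treatment of the Wronskians. You claim that, using Picard--Fuchs relations and the Legendre relation, each Wronskian can be rewritten ``as a polynomial (or algebraic function)'' whose sign is checked by Sturm sequences. This step would fail. The Picard--Fuchs (derivative) relations only close the system $\{I,J\}$ under differentiation; they let you express $W_5$ as an algebraic prefactor times a \emph{linear} form $W_{51}I+W_{52}J$ and $W_6$ as an algebraic prefactor times a \emph{quadratic} form $W_{61}I^2+W_{62}IJ+W_{63}J^2$ with polynomial coefficients, but they do not eliminate the transcendental functions $I$ and $J$. The Legendre relation is of no help here either, since it couples a modulus with its complementary modulus and cannot reduce a single-modulus combination of $K$ and $E$ to an algebraic quantity. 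Consequently the nonvanishing of $W_5$ and $W_6$ is not an algebraic positivity problem, and this is precisely where the real work of the paper lies: one sets $v=J/I$, observes that $v$ satisfies a Riccati-type differential equation, and then (i) for $W_5$ shows that the function $Z_{51}/Z_{52}+v(s)$ cannot vanish by comparing the forced sign of its derivative at a putative zero with its asymptotic behaviour at the two endpoints, and (ii) for $W_6$ compares the conic $Z_{61}+Z_{62}v+Z_{63}v^2=0$ in the $(s,v)$-plane with the solution curve $v=v(s)$ of the Riccati system, bounding the number of contact points via a resultant computation. Without an argument of this qualitative, ODE-theoretic kind (or an equivalent substitute), your plan cannot establish the ECT property for the last two generators, and hence cannot deliver the upper bound of $5$.
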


Theorem \ref{th:S4} and Theorem 1.1 in the paper \cite{LC} completely answer the questions left in
Table 1 of the paper \cite{LM}, and thus, by the averaging method of first order, there are at least 4 limit cycles bifurcating from the periodic orbits of the isochronous centers $S_1$ and $S_3$, while at least 5 limit cycles can bifurcate from the periodic orbits of the isochronous centers $S_2$ and $S_4$, under piecewise smooth quadratic polynomial perturbations.
More importantly, Theorem \ref{th:S4} gives the exact upper bound of the number of limit cycles bifurcated from the periodic orbits of the quadratic isochronous center $S_4$, which is challenging.

The paper is organized as follows. In sections \ref{sec:pre}, we introduce the basic theory on averaging method, and provide a new technique transforming planar polynomial differential system to a specific form, which is an extension of the theorem \cite{BL}. The averaged function associated to system \eqref{S4} is  obtained in section \ref{sec:aver}. Section \ref{sec:proof} focuses on the analysis of the least upper bound for the number of zeros of the averaged function, and \emph{Chebyshev criterion} is used to prove the main result. Finally, we give some important and long expressions in Appendix for reference.

\section{Preliminary results}\label{sec:pre}
In this section we summarize the theorem of first order averaging
method for discontinuous differential
systems in \cite{LNT}. A more general introduction to averaging
method can be found in the book \cite{SV}.

\begin{theorem}\label{th:average}
\cite{LNT} Consider the following discontinuous differential
equation
\begin{equation}\label{x}
\dfrac{\mathrm{d}x}{\mathrm{d}t}=\varepsilon
F(t,x)+\varepsilon^{2}R(t,x,\varepsilon),
\end{equation}
with
\begin{equation}\begin{array}{ll}
F(t,x)=F_{1}(t,x)+\mathrm{sign}(h(t,x))F_{2}(t,x),\\[2ex]
R(t,x,\varepsilon)=R_{1}(t,x,\varepsilon)+\mathrm{sign}(h(t,x))R_{2}(t,x,\varepsilon),
\end{array}\end{equation}
where $F_{1}, F_{2}: \mathbb{R}\times D\rightarrow \mathbb{R}^n,\
R_{1}, R_{2}: \mathbb{R}\times D\times (-\varepsilon_{0},
\varepsilon_{0})\rightarrow \mathbb{R}^n$, and $h:\mathbb{R}\times
D\rightarrow \mathbb{R}$ are continuous functions, T-periodic in the
variable $t$ and $D$ is an open subset of $\mathbb{R}^n$.
$\mathrm{sign}(h)$ is the sign function defined by
\begin{equation*}\mathrm{sign}(h)=\left\{\begin{array}{ll}
1 & \mbox{if $h>0$,}\\
-1 & \mbox{if $h<0$.}
\end{array} \right.
\end{equation*}
We also suppose that $h$ is a $C^{1}$ function and has $0$ as a
regular value. Denote by $\mathcal{M}=h^{-1}(0)$, by
$\Sigma=\{0\}\times D \not\subseteq \mathcal{M}$, by
$\Sigma_{0}=\Sigma\setminus\mathcal {M}\neq \O $, and its elements
by $z\equiv (0,z)\notin \mathcal{M}$.

Define the averaged function $f:D\rightarrow \mathbb{R}^n$ as
\begin{equation}\label{f}
f(x)=\displaystyle\int_{0}^{T}F(t,x)\mathrm{d}t.
\end{equation}
We assume the following conditions:

$\mathrm{(i)}$ $F_{1}, F_{2}, R_{1}, R_{2}$ and $h$ are locally
$L$-Lipschitz with respect to $x$;

$\mathrm{(ii)}$ $\frac{\partial h}{\partial t}(t,x)\neq 0$  for each
$(t,x) \in \mathcal{M}$;

$\mathrm{(iii)}$ for $a\in \Sigma_{0}$ with $f(a)=0$, there exists a
neighborhood $V$ of $a$ such that $f(z)\neq 0$ for all $z\in
\bar{V}\setminus \{a\}$ and
$d_{B}(f,V,0)\neq 0$ which is the Brouwer degree function, and more details see Appendix A of \cite{BL}.

Then for $|\varepsilon|>0$ sufficiently small, there exists a
$T$-periodic solution $x(t,\varepsilon)$ of system \eqref{x} such
that $x(0,\varepsilon)\rightarrow a$ as $\varepsilon\rightarrow 0$.
\end{theorem}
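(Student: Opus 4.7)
The plan is to apply the first order averaging theorem (Theorem~\ref{th:average}) to system \eqref{S4}, so that counting limit cycles reduces to counting isolated zeros of a one-variable averaged function $f(r)$. First I would use the generalized change of variables provided by Theorem~\ref{th:G} to bring the unperturbed $S_4$ into a normal form in which the level sets of a first integral become concentric circles and the angular variable satisfies $\dot\theta\equiv 1$. Since $S_4$ has the invariant algebraic curve $1-2y-\tfrac{4}{3}(x^2+y^2)$ (up to normalization), an appropriate rational/trigonometric substitution will rescale time to the normal form $\dot u=-v+\varepsilon\widetilde P_i,\ \dot v=u+\varepsilon\widetilde Q_i$ where $\widetilde P_i,\widetilde Q_i$ are rational in $(u,v)$. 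Switching to polar coordinates $(r,\theta)$ gives the standard slow/fast form $\dfrac{dr}{d\theta}=\varepsilon F(\theta,r)+O(\varepsilon^2)$ with $F=F_1+\mathrm{sign}(\cos\theta)F_2$, since the discontinuity line $x=0$ in the original variables becomes $\cos\theta=0$ in the new angle.

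Next I would compute explicitly the averaged function
\begin{equation*}
f(r)=\int_{-\pi/2}^{\pi/2}\!\big(F_1(\theta,r)+F_2(\theta,r)\big)d\theta+\int_{\pi/2}^{3\pi/2}\!\big(F_1(\theta,r)-F_2(\theta,r)\big)d\theta,
\end{equation*}
where after substituting the coordinate change the integrands are rational functions of $\sin\theta,\cos\theta$ with denominators built from the pullback of $1-\tfrac{4}{3}(x^2+y^2)-2y$. The resulting integrals can be evaluated term-by-term (e.g.\ via the Weierstrass substitution on each half) and I expect to land on an expression of the form
\begin{equation*}
f(r)=\alpha_0(r)+\alpha_1(r)\sqrt{A(r)}+\alpha_2(r)\log B(r)+\alpha_3(r)K(k(r))+\alpha_4(r)E(k(r)),
\end{equation*}
with $K,E$ the complete elliptic integrals of the first and second kind, and $\alpha_j(r)$ algebraic functions whose coefficients depend linearly on the twenty parameters $a_{ij},b_{ij},c_{ij},d_{ij}$ of \eqref{PQ}. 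Because the two ``sides'' $P_1,Q_1$ and $P_2,Q_2$ contribute to $f$ only through their sum on one arc and difference on the other, the effective parameter space collapses to a lower-dimensional linear span; I would isolate the resulting generators $\{g_0,\ldots,g_n\}$ and verify that the span has dimension exactly $6$, so that $f$ lies in a vector space of dimension $6$.

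To pass from this $6$-dimensional span to the sharp bound $5$, I would follow the idea of \cite{GI} and remove the logarithmic generator first: differentiating $f$ once and clearing an appropriate algebraic factor cancels the $\log B(r)$ contribution without introducing new zeros (since one checks that the derivative of the logarithmic part is already rational/algebraic, and the multiplicative factor used is strictly positive on the open interval of $r$ corresponding to the period annulus). After this reduction the problem becomes counting zeros of a linear combination of five functions involving $1,\sqrt{A},K,E$ and one algebraic term. At this point I would apply the Chebyshev criterion of Grau--Ma\~nosas--Villadelprat: via a suitable monotone change of the radial variable (say $s=k(r)^2$), the five generators are put in the standard form on an open interval $I$, and I would check that all their Wronskians $W[g_0,\ldots,g_j]$, $j=0,\ldots,4$, are nonvanishing on $I$. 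This is the step I expect to be the main obstacle, since the Wronskians involve derivatives of $K$ and $E$ through the identities $K'(k)=(E-(1-k^2)K)/(k(1-k^2))$ and $E'(k)=(E-K)/k$, producing unwieldy algebraic expressions; I would simplify them by pulling out positive factors and reducing the sign problem of the numerator to nonvanishing of an explicit polynomial in $s$, verified by Sturm's algorithm or by showing monotonicity.

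Finally, the \emph{realizability} part: once the ECT-system structure is established, any combination of $\leq 5$ prescribed sign changes can be attained by choosing appropriate values of the coefficients $\alpha_j$ (which are linear in the perturbation parameters), so I would exhibit an explicit numerical choice of $(a_{ij},b_{ij},c_{ij},d_{ij})$ producing five simple zeros of $f$ inside the period annulus, each giving a hyperbolic limit cycle by Theorem~\ref{th:average}. Combining the upper bound from the Chebyshev criterion with the explicit lower bound then yields exactly $5$, completing the proof of Theorem~\ref{th:S4}.
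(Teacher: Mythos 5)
There is a fundamental mismatch here: the statement you were asked to prove is Theorem \ref{th:average}, the abstract first-order averaging theorem for discontinuous differential equations (existence of a $T$-periodic solution of \eqref{x} near a point $a$ where the averaged function $f$ vanishes with nonzero Brouwer degree). Your proposal does not prove this statement at all; instead it outlines a strategy for \emph{applying} Theorem \ref{th:average} to system \eqref{S4} in order to prove the paper's main result, Theorem \ref{th:S4}. Note that the paper itself offers no proof of Theorem \ref{th:average} either --- it is quoted verbatim from \cite{LNT} --- so the correct response to this statement is either to cite that reference or to reproduce its argument, not to prove a downstream corollary.

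A genuine proof of the stated theorem would have to proceed along entirely different lines from what you wrote: use hypothesis (ii) to guarantee that solutions cross the switching manifold $\mathcal{M}=h^{-1}(0)$ transversally, so that together with the local Lipschitz conditions (i) the discontinuous system has unique solutions depending continuously on initial data and on $\varepsilon$; construct the time-$T$ Poincar\'e translation map and expand the displacement as $x(T,z,\varepsilon)-z=\varepsilon f(z)+O(\varepsilon^{2})$, where $f$ is exactly the averaged function \eqref{f}; and finally invoke the homotopy invariance of the Brouwer degree together with hypothesis (iii) to conclude that the displacement map has a zero in $V$ for all small $|\varepsilon|>0$, which is the desired $T$-periodic solution converging to $a$. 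None of these ingredients (transversality at $\mathcal{M}$, the Poincar\'e map expansion, the degree-theoretic fixed-point step) appears in your proposal, so as a proof of Theorem \ref{th:average} it is not a partial argument with a gap --- it addresses a different theorem.
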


\begin{remark}\label{re:average}
If $f$ is a $C^{1}$ function and the Jacobian $J_{f}(a)\neq 0$ , then the hypothesis (iii) holds, see \cite{BL}.
\end{remark}

Consider the planar differential system
\begin{equation}\begin{split}\label{eq:xyp}
\dot{x}&=P(x,y)+\varepsilon p(x,y),\\
\dot{y}&=Q(x,y)+\varepsilon q(x,y),
\end{split}\end{equation}
where $P(x,y)$, $Q(x,y)$, $p(x,y)$ and $q(x,y)$ are continuous functions, and $\varepsilon$ is a small parameter. Suppose that
system \eqref{eq:xyp}$|_{\varepsilon=0}$ has a continuous family of periodic orbits
\[
\Gamma_{h}:\ H(x,y)=h, \,\,h\in(h_c,h_s),
\]
where $H(x,y)$ is a first integral of \eqref{eq:xyp}$|_{\varepsilon=0}$, and $h_c$ and $h_s$ correspond to the center and
the separatrix polycycle, respectively.

The following theorem provides an approach to transform system \eqref{eq:xyp} to the form \eqref{x}.

\begin{theorem}\label{th:BL}
\cite{BL} Consider system \eqref{eq:xyp}$|_{\varepsilon=0}$ and its first integral $H=H(x,y)$. Assume that $xQ(x,y)-yP(x,y)\neq 0$ for all
$(x,y)$ in the period annulus. Let $\rho:(\sqrt{h_c},\sqrt{h_s})\times[0,2\pi)\rightarrow[0,\infty)$ be a continuous function such that
\begin{equation}
H(\rho(r,\theta)\cos\theta,\rho(r,\theta)\sin\theta)=r^2,
\end{equation}
for all $r\in(\sqrt{h_c},\sqrt{h_s})$ and all $\theta\in[0,2\pi)$. Then the differential equation which
describes the dependence between the square root of energy $r=\sqrt{h}$ and the angle $\theta$ for system \eqref{eq:xyp}
is
\begin{equation}
\dfrac{\mathrm{d}r}{\mathrm{d}\theta}=\varepsilon \dfrac{\mu(x^2+y^2)(Q p-Pq)}{2r(Qx-Py)+2r\varepsilon(qx-py)},
\end{equation}
where $\mu=\mu(x,y)$ is the integrating factor of system \eqref{eq:xyp}$|_{\varepsilon=0}$ corresponding to the first integral $H$, and $x=\rho(r,\theta)\cos\theta$ and $y=\rho(r,\theta)\sin\theta$.
\end{theorem}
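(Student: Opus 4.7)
The plan is to derive the differential equation by setting up polar-type coordinates adapted to the level sets of $H$ and then computing $\dot r$ and $\dot\theta$ directly along the perturbed flow. I would write $x=\rho(r,\theta)\cos\theta$, $y=\rho(r,\theta)\sin\theta$ with $H(x,y)=r^{2}$, and first check that $\rho$ is well-defined. Since $H$ is a first integral of \eqref{eq:xyp}$|_{\varepsilon=0}$ with integrating factor $\mu$, the $1$-form $\mu(Q\,dx-P\,dy)$ is exact and equals $dH$, so $H_{x}=\mu Q$ and $H_{y}=-\mu P$. Therefore
\[
\frac{d}{d\rho}H(\rho\cos\theta,\rho\sin\theta)=H_{x}\cos\theta+H_{y}\sin\theta=\frac{\mu(Qx-Py)}{\rho},
\]
and the hypothesis $xQ-yP\neq 0$ on the period annulus (together with $\mu\neq 0$) makes this monotone in $\rho$. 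Hence along every ray from the center the equation $H=r^{2}$ admits a unique continuous solution $\rho(r,\theta)$.

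Next I would differentiate $H(x,y)=r^{2}$ along trajectories of the perturbed system \eqref{eq:xyp}:
\[
2r\,\dot r=H_{x}\dot x+H_{y}\dot y=\mu Q(P+\varepsilon p)-\mu P(Q+\varepsilon q)=\varepsilon\,\mu(Qp-Pq).
\]
The key point is that the $O(1)$ terms cancel automatically because $H$ is a first integral of the unperturbed flow; this is the only real content of the computation. So $\dot r=\varepsilon\mu(Qp-Pq)/(2r)$. For the angular variable I differentiate $\theta=\arctan(y/x)$ to get
\[
\dot\theta=\frac{x\dot y-y\dot x}{x^{2}+y^{2}}=\frac{(Qx-Py)+\varepsilon(qx-py)}{x^{2}+y^{2}}.
\]
The same hypothesis $Qx-Py\neq 0$ guarantees, for $|\varepsilon|$ sufficiently small, that $\dot\theta$ keeps a constant sign on the period annulus, so $\theta$ can be legitimately used as the independent variable.

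Taking the quotient gives directly
\[
\frac{dr}{d\theta}=\frac{\dot r}{\dot\theta}=\varepsilon\,\frac{\mu(x^{2}+y^{2})(Qp-Pq)}{2r(Qx-Py)+2r\varepsilon(qx-py)},
\]
which is the stated normal form. The argument is essentially a chain-rule calculation; I do not foresee a substantive obstacle. The only items requiring care are the sign convention in the integrating-factor identities $H_{x}=\mu Q,\; H_{y}=-\mu P$ (a bookkeeping check that must produce $Qp-Pq$ in the numerator with the correct sign), and the verification that the hypothesis $xQ-yP\neq 0$ simultaneously guarantees the existence of $\rho(r,\theta)$ and the non-vanishing of $\dot\theta$, so that $(r,\theta)$ is a valid coordinate system on the period annulus.
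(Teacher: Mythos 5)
Your derivation is correct and is essentially the standard argument: the paper does not reproduce a proof of this theorem but simply cites \cite{BL}, and the proof there proceeds exactly as you do, computing $2r\dot r=\varepsilon\mu(Qp-Pq)$ from $H=r^{2}$ and $\dot\theta=((Qx-Py)+\varepsilon(qx-py))/(x^{2}+y^{2})$ and taking the quotient. Your sign conventions $H_{x}=\mu Q$, $H_{y}=-\mu P$ and the use of $xQ-yP\neq 0$ both for the well-definedness of $\rho$ and for the non-vanishing of $\dot\theta$ are all as in the cited source.
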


The application of Theorem \ref{th:BL} is limit to isochronous centers $S_1$ and $S_2$, etc. We give a generalization of Theorem \ref{th:BL} as follows, which is especially applicable to isochronous centers $S_3$, $S_4$, and so on.

\begin{theorem}\label{th:G}
Consider system \eqref{eq:xyp}$|_{\varepsilon=0}$ and its first integral $H=H(x,y)$. If there exist differential homeomorphisms $X=h_1(x,y)$, $Y=h_2(x,y)$ such that $(X\widetilde{Q}-Y\widetilde{P})|_{x=g_1(X,Y),y=g_2(X,Y)}\neq 0$ for all
$(X,Y)$ in the period annulus, where $\widetilde{P}=\frac{\partial h_1}{\partial x}P+\frac{\partial h_1}{\partial y} Q$, $\widetilde{Q}=\frac{\partial h_2}{\partial x} P+\frac{\partial h_2}{\partial y} Q$, and $x=g_1(X,Y)$, $y=g_2(X,Y)$ are the inverse functions of $X=h_1(x,y)$, $Y=h_2(x,y)$. Let $\rho:(\sqrt{\mathcal{R}^{-1}(h_c)},\sqrt{\mathcal{R}^{-1}(h_s)})\times[0,2\pi)\rightarrow[0,\infty)$ be a continuous function such that
\begin{equation}
\widetilde{H}(\rho(r,\theta)\cos\theta,\rho(r,\theta)\sin\theta)=\mathcal{R}(r^2),
\end{equation}
for all $r\in(\sqrt{\mathcal{R}^{-1}(h_c)},\sqrt{\mathcal{R}^{-1}(h_s)})$ and all $\theta\in[0,2\pi)$, where $r=\sqrt{\mathcal{R}^{-1}(h)}$ is the inverse function of $h=\mathcal{R}(r^2)$ that increases with $r$, and $\widetilde{H}=\widetilde{H}(X,Y)=H|_{x=g_1(X,Y),y=g_2(X,Y)}$. Then the differential equation which
describes the dependence between the square root of energy $r=\sqrt{\mathcal{R}^{-1}(h)}$ and the angle $\theta$ for system \eqref{eq:xyp}
is
\begin{equation}
\dfrac{\mathrm{d}r}{\mathrm{d}\theta}=\varepsilon \dfrac{\mu(X^2+Y^2)(Qp-Pq)}{2r\mathcal{R}'(\widetilde{Q}X-\widetilde{P}Y)+2r\mathcal{R}'\varepsilon(\widetilde{q}X-\widetilde{p}Y)}\Bigg{|}_{x=g_1(X,Y),y=g_2(X,Y)},
\end{equation}
where $\mu=\mu(x,y)$ is the integrating factor of system \eqref{eq:xyp}$|_{\varepsilon=0}$ corresponding to the first integral $H$, $\mathcal{R}'$ is the derivative of $\mathcal{R}(r^2)$ with respect to $r^2$, $\widetilde{p}=\frac{\partial h_1}{\partial x}p+\frac{\partial h_1}{\partial y} q$, $\widetilde{q}=\frac{\partial h_2}{\partial x} p+\frac{\partial h_2}{\partial y} q$, and $X=\rho(r,\theta)\cos\theta$ and $Y=\rho(r,\theta)\sin\theta$.
\end{theorem}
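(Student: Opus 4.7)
The plan is to reproduce the argument of Theorem \ref{th:BL} after first pushing the perturbed system forward through the diffeomorphism $(X,Y) = (h_1(x,y), h_2(x,y))$, so that $\widetilde{H}$ plays the role of $H$ and $h = \mathcal{R}(r^2)$ provides the natural energy parameterization in the $(X,Y)$-plane. Under this change of variables, system \eqref{eq:xyp} becomes
\[
\dot{X} = \widetilde{P} + \varepsilon \widetilde{p}, \qquad \dot{Y} = \widetilde{Q} + \varepsilon \widetilde{q},
\]
and since $\widetilde{H}(X,Y) = H(g_1(X,Y), g_2(X,Y))$ is the pull-back of a first integral of the unperturbed original system, it is automatically a first integral of the unperturbed $(X,Y)$-system.

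The key computational step is to identify the ``perturbation derivative'' $\widetilde{H}_X \widetilde{p} + \widetilde{H}_Y \widetilde{q}$ intrinsically. Expanding $\widetilde{H}_X, \widetilde{H}_Y$ and $\widetilde{p}, \widetilde{q}$ by the chain rule and collecting terms by $H_x p, H_x q, H_y p, H_y q$, one finds that the coefficients are precisely the entries of the Jacobian product $\mathrm{Jac}(g) \cdot \mathrm{Jac}(h) = I$. After cancellation this yields
\[
\widetilde{H}_X \widetilde{p} + \widetilde{H}_Y \widetilde{q} = (H_x p + H_y q)\big|_{x = g_1(X,Y),\, y = g_2(X,Y)},
\]
which, using the integrating-factor relations $H_x = \mu Q,\ H_y = -\mu P$, equals $\mu(Qp - Pq)$ evaluated at the inverse transformation.

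With this identity in hand, I would introduce the polar-like coordinates $X = \rho(r,\theta)\cos\theta,\ Y = \rho(r,\theta)\sin\theta$ and differentiate $\widetilde{H}(X,Y) = \mathcal{R}(r^2)$ with respect to $t$. The unperturbed contribution $\widetilde{H}_X \widetilde{P} + \widetilde{H}_Y \widetilde{Q}$ vanishes, leaving $\varepsilon(\widetilde{H}_X \widetilde{p} + \widetilde{H}_Y \widetilde{q}) = 2r \mathcal{R}' \dot{r}$, and hence $\dot{r} = \varepsilon \mu (Qp - Pq)/(2r\mathcal{R}')\big|_{x=g_1,\,y=g_2}$. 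Computing $\dot{\theta}$ from $\tan\theta = Y/X$ together with $X^2 + Y^2 = \rho^2$ gives $\dot{\theta} = [\widetilde{Q}X - \widetilde{P}Y + \varepsilon(\widetilde{q}X - \widetilde{p}Y)]/(X^2+Y^2)$, which is nonvanishing for sufficiently small $\varepsilon$ by the transversality hypothesis $\widetilde{Q}X - \widetilde{P}Y \neq 0$. Dividing $\dot{r}/\dot{\theta}$ produces the stated formula. The main obstacle will be carrying out the chain-rule expansion carefully to verify the integrating-factor identity above, since this is precisely where the generalization beyond Theorem \ref{th:BL} lives; the remaining steps are direct transcriptions once this identity and the monotonicity $\mathcal{R}' \neq 0$ (which makes $r = \sqrt{\mathcal{R}^{-1}(h)}$ a well-defined smooth reparameterization) are in place.
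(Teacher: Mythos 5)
Your proposal is correct and follows exactly the route the paper intends: the authors omit the proof of Theorem \ref{th:G}, stating only that it mirrors the argument for Theorem \ref{th:BL} from \cite{BL}, and your adaptation supplies precisely the missing details — the Jacobian cancellation $\mathrm{Jac}(g)\cdot\mathrm{Jac}(h)=I$ giving $\widetilde{H}_X\widetilde{p}+\widetilde{H}_Y\widetilde{q}=\mu(Qp-Pq)$, the chain rule on $\widetilde{H}=\mathcal{R}(r^2)$ yielding $\dot{r}$, and the quotient with $\dot{\theta}=(X\dot{Y}-Y\dot{X})/(X^2+Y^2)$. No gaps.
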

\begin{proof}
The proof is similar to that of Theorem \ref{th:BL} (see \cite{BL} for reference), and thus we omit it here.
\end{proof}
\begin{remark}
(i) Theorem \ref{th:BL} is a special case of Theorem \ref{th:G} with $h_1(x,y)=x$, $h_2(x,y)=y$ and $\mathcal{R}(r^2)=r^2$.
(ii) In Theorem \ref{th:G}, we can also choose $X=\rho(r,\theta)\sin\theta$ and $Y=\rho(r,\theta)\cos\theta$. Moreover, in the present paper, we use this transformation for convenient calculation.
\end{remark}
For example, system $S_3$ has a first integral
\[
H_3=\dfrac{9(x^2+y^2)-24x^2y+16x^4}{3-16y}.
\]
Choose $X=3x=\rho(r,\theta)\sin\theta, Y=4x^2-3y=\rho(r,\theta)\cos\theta$, and $\mathcal{R}(r^2)=27r^2/(64(1-r^2))$,
we get $\rho(r,\theta)=9r/(8(1-r\cos\theta))$.

System $S_4$ has a first integral
\[
H_4=\dfrac{9(x^2+y^2)+24y^3+16y^4}{(3+8y)^4},
\]
and the period annulus around the isochronous center corresponds to $H_4=h, h\in(0,1/256)$.
Choose
\begin{equation}\label{tr1}
X=3x,\quad Y=4y^2+3y, \quad \mathcal{R}(r^2)=\dfrac{r^2}{256},
\end{equation}
thus
\begin{equation}\label{tr2}
\rho(r,\theta)=\dfrac{9r}{16(1-r\cos\theta)},\quad r\in(0,1).
\end{equation}

\section{ Averaged function }\label{sec:aver}
We will get the averaged function in this section.

By the polar coordinates
\begin{equation}\label{tr3}
x=\dfrac{3r\sin\theta}{16(1-r\cos\theta)},\quad y=\dfrac{3}{8}\left(-1+\dfrac{1}{\sqrt{1-r\cos\theta}}\right),\quad r\in(0,1),
\end{equation}
which can be obtained from \eqref{tr1} and \eqref{tr2}, system \eqref{S4} is transformed to the following form:
\begin{equation}\label{T1}
\dfrac{\mathrm{d}r}{\mathrm{d}\theta}=\left\{\begin{array}{ll}
\varepsilon X_1(\theta,r)+\varepsilon^2Y_1(\theta,r,\varepsilon), & \mbox{if $\sin\theta>0$,}\\
\varepsilon X_2(\theta,r)+\varepsilon^2Y_2(\theta,r,\varepsilon), & \mbox{if $\sin\theta<0$,}
\end{array} \right.
\end{equation}
where
\begin{equation}\begin{split}
X_i(\theta,r)=\dfrac{16T_i(\theta,r)}{3},\quad
Y_i(\theta,r)=-\dfrac{256T_i(\theta,r)S_i(\theta,r)}{9r+48\varepsilon S_i(\theta,r)},
\end{split}\end{equation}
with
\begin{equation}
\begin{split}
T_i(\theta,r)&=-(1-r\cos\theta)\sin\theta P_i(\theta,r)+
\sqrt{1-r\cos\theta}(r-\cos\theta) Q_i(\theta,r),\\
S_i(\theta,r)&=-(1-r\cos\theta)\cos\theta P_i(\theta,r)
+\sqrt{1-r\cos\theta}\sin\theta Q_i(\theta,r).
\end{split}
\end{equation}
Here $P_i(\theta,r)$ and $Q_i(\theta,r)$ derive from $P_i(x,y)$ and $Q_i(x,y)$ given in \eqref{PQ} by the variable translations \eqref{tr3}, $i=1,2$.

Let
\begin{equation*}
\begin{split}
F_i(\theta,r)&=\dfrac{1}{2}\left(X_1(\theta,r)-(-1)^{i}X_2(\theta,r)\right),\\
R_i(\theta,r,\varepsilon)&=\dfrac{1}{2}\left(Y_1(\theta,r,\varepsilon)-(-1)^{i}Y_2(\theta,r,\varepsilon)\right), \quad i=1,2.
\end{split}
\end{equation*}
System \eqref{T1} can be reduced to the standard form
\begin{equation}\label{T2}
\dfrac{\mathrm{d}r}{\mathrm{d}\theta}=\varepsilon F(\theta,r)+\varepsilon^2R(\theta,r,\varepsilon),
\end{equation}
where
\begin{equation*}
\begin{split}
F(\theta,r)&=F_1(\theta,r)+\mathrm{sign}(\sin\theta)F_2(\theta,r),\\
R(\theta,r,\varepsilon)&=R_1(\theta,r,\varepsilon)+\mathrm{sign}(\sin\theta)R_2(\theta,r,\varepsilon).
\end{split}
\end{equation*}
It is easy to verify that equation \eqref{T2} satisfies the three conditions of Theorem \ref{th:average}.
Thus, the averaged function is given by
\begin{equation}
\begin{split}
f(r)&=\displaystyle\int_0^{2\pi}F(\theta,r)\mathrm{d}\theta\\
&=\displaystyle\int_0^{\pi}X_1(\theta,r)\mathrm{d}\theta
+\displaystyle\int_\pi^{2\pi}X_2(\theta,r)\mathrm{d}\theta.
\end{split}
\end{equation}
Use a change of variable $\theta\rightarrow2\pi-\theta$ for the second part, hence
\begin{equation}
\begin{split}
f(r)&=\dfrac{16}{3}\displaystyle\int_0^{\pi}T_1(\theta,r)\mathrm{d}\theta
+\dfrac{16}{3}\displaystyle\int_\pi^{2\pi}T_2(\theta,r)\mathrm{d}\theta\\
&=\dfrac{16}{3}\displaystyle\int_0^{\pi}[-(1-r\cos\theta)\sin\theta P(\theta,r)+
\sqrt{1-r\cos\theta}(r-\cos\theta)Q(\theta,r)]\mathrm{d}\theta,
\end{split}
\end{equation}
where
\begin{equation*}\begin{split}
P(\theta,r)&=\displaystyle\sum_{i+j=1}^2(a_{ij}-(-1)^ic_{ij})\left(\dfrac{3r\sin\theta}{16(1-r\cos\theta)}\right)^i
\left(\frac{3}{8}\left(-1+\frac{1}{\sqrt{1-r\cos\theta}}\right)\right)^j,\\
Q(\theta,r)&=\displaystyle\sum_{i+j=1}^2(b_{ij}+(-1)^id_{ij})\left(\dfrac{3r\sin\theta}{16(1-r\cos\theta)}\right)^i
\left(\frac{3}{8}\left(-1+\frac{1}{\sqrt{1-r\cos\theta}}\right)\right)^j.
\end{split}\end{equation*}

By direct computation, we get the averaged function
\begin{equation}\label{fr}
f(r)=k_1f_1+k_2f_2+k_3f_3+k_4f_4+k_5f_5+k_6f_6,\quad  r\in(0,1),
\end{equation}
where
\begin{equation}\begin{split}
f_1&=r,\\
f_2&=\dfrac{3r+(1-r)^{\frac{3}{2}}-(1+r)^{\frac{3}{2}}}{r},\\
f_3&=-2+(1-r)^{\frac{3}{2}}+(1+r)^{\frac{3}{2}},\\
f_4&=\dfrac{2r-(1-r^2)\ln\left(\frac{1+r}{1-r}\right)}{r},\\
f_5&=rI(r),\\
f_6&=\frac{I(r)-(1-r^2)J(r)}{r},
\end{split}\end{equation}
and
\begin{equation}\begin{split}\label{k}
k_1&=\dfrac{\pi}{16}(-8(a_{10}+c_{10})+3(a_{11}+c_{11})+32(b_{01}+d_{01})-24(b_{02}+d_{02})),\\
k_2&=\dfrac{1}{6}(8(a_{01}-c_{01})-6(a_{02}-c_{02})+8(b_{10}-d_{10})-3(b_{11}-d_{11})),\\
k_3&=\dfrac{1}{4}(8(b_{10}-d_{10})-3(b_{11}-d_{11})),\\
k_4&=-\dfrac{3}{16}(a_{20}-c_{20}-2(b_{11}-d_{11})),\\
k_5&=-\dfrac{1}{8}(16(b_{01}+d_{01})+3(b_{20}+d_{20})-6(b_{02}+d_{02})),\\
k_6&=-\dfrac{1}{12}(3(a_{11}+c_{11})+8(b_{01}+d_{01})-6(b_{20}+d_{20})-12(b_{02}+d_{02})),
\end{split}\end{equation}
with
\begin{equation}\begin{split}\label{IJ}
I(r)&=\displaystyle\int_0^{\pi}\sqrt{1-r\cos\theta}\mathrm{d}\theta
=2\sqrt{1+r}E\left(\frac{2r}{1+r}\right),\\
J(r)&=\displaystyle\int_0^{\pi}\frac{1}{\sqrt{1-r\cos\theta}}\mathrm{d}\theta
=\frac{2K\left(\frac{2r}{1+r}\right)}{\sqrt{1+r}}.
\end{split}\end{equation}
Here $K(r)$ and $E(r)$ are the first and second complete elliptic integrals, respectively.

It follows from
\begin{equation}
\dfrac{\partial(k_1,k_2,k_3,k_4,k_5,k_6)}
{\partial(a_{10},a_{01},a_{20},a_{11},b_{10},b_{01})}=-\dfrac{\pi}{8}\neq0
\end{equation}
that $k_1,k_2,\cdots,k_6$ are independent.
In order to identify that $f_1,f_2,\cdots,f_6$ are linearly independent functions,
we carry out Taylor expansions in the variable $r$ around $r=0$ for these functions:
\begin{equation}\begin{split}
f_1&=r,\\
f_2&=\dfrac{r^2}{8}+\dfrac{3r^4}{128}+\dfrac{9r^6}{1024}+O(r^7),\\
f_3&=\dfrac{3r^2}{4}+\dfrac{3r^4}{64}+\dfrac{7r^6}{512}+O(r^7),\\
f_4&=\dfrac{4r^2}{3}+\dfrac{4r^4}{15}+\dfrac{4r^6}{35}+O(r^7),\\
f_5&=\pi r-\dfrac{\pi r^3}{16}-\dfrac{15\pi r^5}{1024}+O(r^7),\\
f_6&=\dfrac{3\pi r}{4}+\dfrac{9\pi r^3}{128}+\dfrac{105\pi r^5}{4096}+O(r^7).
\end{split}\end{equation}
Since the determinant of the coefficient matrix of the variables $r,r^2,r^3,r^4,r^5,r^6$ is equal to $-685\pi^2/7516192768$, these functions are linearly independent.

\section{Proof of the main result}\label{sec:proof}
This section is devoted to the study of the number of zeros of $f(r)$ obtained in \eqref{fr}. By Lemma \ref{le:CGP} below, it is easy to know that $f(r)$ has at least 5 zeros in $r\in(0,1)$. However, it is difficult to estimate the sharp upper bound of the number of zeros. We will consider $F(r)=rf(r)$ instead of $f(r)$ for convenience, which has the same zeros as $f(r)$ in $r\in(0,1)$.

\begin{lemma}\label{le:CGP}\cite{CGP}
Consider $n$ linearly independent analytical functions $f_i(x):D\rightarrow\mathbb{R},i=1,2,\cdots,n$,
where $D\subset\mathbb{R}$ is an interval. Suppose that there exists $k\in{1,2,\cdots,n}$ such that $f_k(x)$ has constant sign. Then there exists $n$ constants $c_i, i=1,2,\cdots,n$ such that $c_1f_1(x)+
c_2f_2(x)+\cdots+c_nf_n(x)$ has at least $n-1$ simple zeros in $D$.
\end{lemma}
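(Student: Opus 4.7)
The plan combines an interpolation at $n-1$ nodes with a genericity argument on the node positions; the constant-sign hypothesis on $f_k$ enters only in the initial normalization. First I would normalize: replacing $f_k$ by $-f_k$ if needed, assume $f_k > 0$ on $D$ and set $g_i := f_i/f_k$ for $i = 1, \ldots, n$, so that each $g_i$ is analytic on $D$ with $g_k \equiv 1$. The family $\{g_1, \ldots, g_n\}$ inherits linear independence from $\{f_1, \ldots, f_n\}$ (multiplication by a nonvanishing analytic function is invertible), and the zeros, together with multiplicities, of a combination $\sum c_i f_i$ coincide with those of $\sum c_i g_i$. Hence it suffices to build a nontrivial linear combination of the $g_i$ having at least $n-1$ simple zeros.

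For the interpolation step I would use the standard consequence of linear independence that there exist $n$ points $y_1 < \cdots < y_n \in D$ at which the matrix $(g_i(y_j))$ is nonsingular. Setting $x_j := y_j$ for $j = 1, \ldots, n-1$, the homogeneous system $\sum_i c_i g_i(x_j) = 0$ has rank $n-1$ and a one-dimensional kernel. On the open subset $U \subset D^{n-1}$ where this rank is maximal, the normalized solution $\mathbf{c}(\mathbf{x}) = (c_1(\mathbf{x}), \ldots, c_n(\mathbf{x}))$ depends analytically on $\mathbf{x} = (x_1, \ldots, x_{n-1})$, and produces a nontrivial analytic combination $H_\mathbf{x}(t) = \sum_i c_i(\mathbf{x}) g_i(t)$ vanishing at each $x_j$.

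The main obstacle is upgrading the $n-1$ forced zeros to simple ones. For each $j_0$, let $B_{j_0} := \{\mathbf{x} \in U : H_\mathbf{x}'(x_{j_0}) = 0\}$; this is an analytic subset of $U$, and the key claim is that it is a proper subset. Suppose to the contrary that $H_\mathbf{x}'(x_{j_0}) \equiv 0$ on an open subset of $U$. Differentiating the defining identity $\sum_i c_i(\mathbf{x}) g_i(x_j) = 0$ with respect to $x_{j_0}$ and using $H_\mathbf{x}'(x_{j_0}) = 0$ shows that the derivative $(\partial_{x_{j_0}} c_i(\mathbf{x}))$ lies in the same one-dimensional kernel as $(c_i(\mathbf{x}))$; hence, under a fixed normalization, the coefficient vector $\mathbf{c}$ does not actually depend on $x_{j_0}$, so $H_\mathbf{x} = H_{\mathbf{x}'}$ where $\mathbf{x}' = (x_1, \ldots, \hat{x}_{j_0}, \ldots, x_{n-1})$. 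But $H_\mathbf{x}$ must vanish at $x_{j_0}$ for every $x_{j_0}$ in an open interval (with the other nodes held fixed), so the identity theorem forces $H_{\mathbf{x}'} \equiv 0$ on $D$, contradicting $H_{\mathbf{x}'} \not\equiv 0$. Therefore each $B_{j_0}$ is a proper analytic subset of $U$, and any node tuple in the nonempty open complement $U \setminus \bigcup_{j_0=1}^{n-1} B_{j_0}$ yields an $H_\mathbf{x}$ with $n-1$ simple zeros in $D$, completing the proof.
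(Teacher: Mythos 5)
The paper does not actually prove this lemma --- it is quoted from \cite{CGP} and used as a black box --- so there is no in-paper argument to compare against, and your proof has to stand on its own. It does: the interpolation--determinant construction (the one-dimensional kernel of the $(n-1)\times n$ matrix $\bigl(g_i(x_j)\bigr)$, spanned by the signed maximal minors, giving a nontrivial combination $H_{\mathbf{x}}$ vanishing at the nodes) is correct, and so is the genericity step, since differentiating $\sum_i c_i(\mathbf{x})g_i(x_j)=0$ in $x_{j_0}$ really does show that $\partial_{x_{j_0}}\mathbf{c}$ lies in the kernel spanned by $\mathbf{c}$, whence $H_{\mathbf{x}}$ would be (up to a nonzero factor) independent of $x_{j_0}$ while vanishing along an interval of values of $x_{j_0}$, contradicting $H_{\mathbf{x}}\not\equiv 0$ by the identity theorem. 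Two remarks. First, for the final union step you need each $B_{j_0}$ to have empty interior in $U$ (hence, being closed, to be nowhere dense), not merely to be proper; your contradiction argument in fact proves exactly the empty-interior statement, so only the wording needs tightening. Second, after the normalization $g_i=f_i/f_k$ you never use $g_k\equiv 1$, so your argument applies verbatim to the $f_i$ themselves and proves the conclusion \emph{without} the constant-sign hypothesis; this is not an error (the stronger statement is true for analytic functions on an interval --- for $n=2$ one sees it directly from the Wronskian), but it is worth noticing that the hypothesis highlighted in the statement, and exploited in the inductive proof of \cite{CGP}, plays no essential role in your route.
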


Since the expression of $F(r)$ not only includes various elementary functions, such as $(1\pm r)^{3/2}$, $\ln [(1+r)/(1-r)]$, but also contains the first and second complete elliptic integrals, it is challenging to obtain the exact upper bound of the number of zeros of $F(r)$.

The first and the most important step is to eliminate the logarithm function. Enlightened by the idea in \cite{GI}, we get
\begin{equation}
\left(\dfrac{F(r)}{1-r^2}\right)'=\dfrac{G(r)}{(1-r^2)^2},
\end{equation}
where
\begin{equation}\label{G}
G(r)=m_1 g_1+m_2 g_2+m_3 g_3+m_4 g_4+m_5 g_5+m_6 g_6,
\end{equation}
with
\begin{equation}\begin{split}\label{g}
g_1&=r,\\
g_2&=r^2,\\
g_3&=6+4r(\sqrt{1-r}-\sqrt{1+r})-(3+r^2)(\sqrt{1-r}+\sqrt{1+r}),\\
g_4&=-4+2(1+r^2)(\sqrt{1-r}+\sqrt{1+r})+r(-5+r^2)(\sqrt{1-r}-\sqrt{1+r}),\\
g_5&=r\left((-5+r^2)I(r)+(1-r^2)J(r)\right),\\
g_6&=r\left(4I(r)-(1-r^2)J(r)\right),
\end{split}\end{equation}
and
\begin{equation}\begin{split}
m_1=2k_1,\,\,\, m_2=3k_2-2k_3+4k_4,\,\,\, m_3=\dfrac{k_2}{2},\,\,\,m_4=\dfrac{k_3}{2},\,\,\, m_5=-\dfrac{k_5}{2},\,\,\, m_6=\dfrac{k_6}{2}.
\end{split}\end{equation}
Here $I(r), J(r)$ are defined in \eqref{IJ}, and $k_i, i=1,2,\cdots,6$ are given in \eqref{k}. The independence of $m_i, i=1,2,\cdots,6$ come from the independence of $k_i, i=1,2,\cdots,6$.
Thus,
\begin{equation}\label{F}
F(r)=(1-r^2)\int_0^r(1-\xi^2)^{-2}G(\xi)\mathrm{d}\xi,
\end{equation}
and $F(r)$ has at most as many zeroes as $G(r)$ in $(0,1)$.

In the following, we will use \emph{Chebyshev
criterion} to show that $G(r)$ has at most 5 zeroes. We introduce some definitions and lemmas first, see for instance \cite{MV}.

Let $g_{1},g_{2},...,g_{n}$ be analytic functions on an open
interval $L$ of $\mathbb{R}$. An ordered set $(g_{1},g_{2},...,g_{n})$ is an \emph{extended
complete Chebyshev system} (in short, ECT-system) on $L$ if, for all
$i=1,2,...,n$, any nontrivial linear combination
\begin{equation}\label{ECT}
\lambda_{1}g_{1}(x)+\lambda_{2}g_{2}(x)+...+\lambda_{i}g_{i}(x)
\end{equation}
has at most $i-1$ isolated zeros on $L$ counted with multiplicities.

\begin{remark}\label{ECT2}
If $(g_{1},g_{2},...,g_{n})$ is an ECT-system on $L$, then for
each $i=1,2,...,n$, there exists a linear combination \eqref{ECT} with
exactly $i-1$ simple zeros on $L$ (see for instance Remark 3.7 in \cite{GV}).
\end{remark}

\begin{lemma}\label{ECT3}
\cite{MV} $(g_{1},g_{2},...,g_{n})$ is an ECT-system on L
if, and only if, for each $i=1,2,...,n$,
\begin{equation*}
W_{i}(x)=
\begin{vmatrix}
g_{1}(x) &  g_{2}(x) & \cdots &  g_{i}(x)\\
g_{1}^{\prime}(x) &g_{2}^{\prime}(x)  & \cdots & g_{i}^{\prime}(x) \\
\vdots & \vdots & \ddots & \vdots \\
g_{1}^{(i-1)}(x) & g_{2}^{(i-1)}(x) & \cdots & g_{i}^{(i-1)}(x) \\
  \end{vmatrix}\neq{0}, \quad \mbox{$x\in L$} .
\end{equation*}
\end{lemma}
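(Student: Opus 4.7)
The plan is to establish the two implications of the equivalence separately. The $(\Rightarrow)$ direction is short and goes by contraposition; the $(\Leftarrow)$ direction is the substantive one and will proceed by induction on $n$, using a Polya-type reduction of order combined with Rolle's theorem.

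For $(\Rightarrow)$, suppose some $W_i(x_0) = 0$ with $x_0 \in L$. Then the $i$ columns of the Wronskian matrix at $x_0$ are linearly dependent, so there exist scalars $\lambda_1,\ldots,\lambda_i$ not all zero with $\sum_{j=1}^i \lambda_j g_j^{(k)}(x_0) = 0$ for $k = 0,1,\ldots,i-1$. The function $f = \sum_{j=1}^i \lambda_j g_j$ is a nontrivial combination; since the $g_j$ are analytic, either $f \equiv 0$ (which forces $W_i \equiv 0$, inconsistent with the ECT-condition once one notes that $f \equiv 0$ violates the implicit linear-independence consequence of $(g_1,\ldots,g_i)$ being ECT), or $x_0$ is an isolated zero of $f$ of multiplicity at least $i$, contradicting the ECT bound of $i-1$ isolated zeros counted with multiplicity.

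For $(\Leftarrow)$, I would argue by induction on $n$. The case $n=1$ is immediate since $W_1 = g_1 \neq 0$ on $L$. For the inductive step, suppose the implication holds for tuples of length $n-1$, and assume $W_1,\ldots,W_n$ are all nonvanishing on $L$. The ECT-condition for $i \leq n-1$ is handled by applying the inductive hypothesis to the subtuple $(g_1,\ldots,g_{n-1})$, whose Wronskians are the same $W_1,\ldots,W_{n-1}$. For $i = n$, let $f = \sum_{j=1}^n \lambda_j g_j$ be nontrivial with $m$ isolated zeros counted with multiplicities. If $\lambda_n = 0$ we are in the previous case. Otherwise, since $g_1 = W_1$ never vanishes on $L$, the quotient $h = f/g_1$ has the same zeros and multiplicities as $f$, and by Rolle's theorem $h'$ has at least $m-1$ isolated zeros counted with multiplicities. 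Writing $\tilde g_j = (g_{j+1}/g_1)'$ for $j = 1,\ldots,n-1$, we have $h' = \sum_{j=2}^n \lambda_j \tilde g_{j-1}$, which is a nontrivial combination since $\lambda_n \neq 0$. The key algebraic input is the classical Wronskian reduction identity
\begin{equation*}
W\!\left(\tilde g_1,\tilde g_2,\ldots,\tilde g_j\right)(x) \;=\; \frac{W\!\left(g_1,g_2,\ldots,g_{j+1}\right)(x)}{g_1(x)^{\,j+1}},
\end{equation*}
which implies that the nonvanishing of $W_2,\ldots,W_n$ and of $g_1$ on $L$ transfers to the nonvanishing of every Wronskian of the reduced tuple $(\tilde g_1,\ldots,\tilde g_{n-1})$. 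Applying the inductive hypothesis then yields that $h'$ has at most $n-2$ isolated zeros counted with multiplicities, so $m-1 \leq n-2$, i.e.\ $m \leq n-1$.

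The step I expect to be the main obstacle is verifying the reduction identity above cleanly and in a way that is visibly valid at every point of $L$. The cleanest approach is to perform elementary column operations on the $(j{+}1)\times(j{+}1)$ Wronskian $W(g_1,\ldots,g_{j+1})$: divide each column by $g_1$ and subtract the first column in an appropriate way so that the first row becomes $(1,0,0,\ldots,0)$, then apply Leibniz-type rules to identify the remaining minor with the Wronskian of the derivatives $(g_{j+1}/g_1)'$. Once that identity is in hand, the inductive structure, the Rolle's-theorem bookkeeping with multiplicities, and the handling of the $\lambda_n = 0$ case are routine.
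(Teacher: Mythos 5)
The paper does not prove this lemma at all: it is imported verbatim from \cite{MV} (Ma\~{n}osas--Villadelprat) as a known characterization of ECT-systems, so there is no in-paper argument to compare yours against. Your proof is the classical one (essentially the P\'olya/Karlin--Studden derivation-reduction argument), and it is sound. The $(\Rightarrow)$ direction via a zero of multiplicity $\geq i$ at a point where the Wronskian columns are dependent is correct, and the $(\Leftarrow)$ induction using $h=f/g_1$, Rolle's theorem with multiplicities, and the reduction identity $W(\tilde g_1,\ldots,\tilde g_j)=W(g_1,\ldots,g_{j+1})/g_1^{\,j+1}$ is the standard route; the identity itself is classical and your column-operation sketch for it is the usual verification. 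The only soft spot is the one you half-acknowledge: under a literal reading of the definition in the paper, a nontrivial combination that is \emph{identically zero} has no isolated zeros and so vacuously satisfies the bound, which would break the $(\Rightarrow)$ direction for linearly dependent families. This is resolved by the standard convention that ``nontrivial linear combination'' means the resulting function is not the zero function (equivalently, that an ECT-system is by definition linearly independent), and in the paper's application the functions $g_1,\ldots,g_6$ are separately shown to be independent, so nothing is lost. With that convention made explicit, your proof is complete.
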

The following is to prove that $(g_1, g_2, \cdots, g_6)$ in \eqref{g} is an ECT-system. A direct calculation leads to

\begin{equation}\begin{split}\label{W1W6}
W_1(r)=&r,\\
W_2(r)=&r^2,\\
W_3(r)=&\frac{3(\sqrt{1+r}(-8+4r+r^2+r^3)-\sqrt{1-r}(8+4r-r^2+r^3)+16\sqrt{1-r^2})}{4\sqrt{1-r^2}},\\
W_4(r)=&\frac{15}{8(1-r^2)^{\frac{3}{2}}}\left(r(80-81r^2+9 r^4)+4(1+r)^{\frac{3}{2}}(6-29r+21r^2)\right.\\
&\left.-4(1-r)^{\frac{3}{2}}(6+29r+21r^2)-r\sqrt{1-r^2}(-80+41r^2+3r^4)\right),\\
W_5(r)=&\frac{15}{64r^3(1-r^2)^{\frac{7}{2}}}\left(W_{51}(r)I(r)+W_{52}(r)J(r)\right),\\
W_6(r)=&\frac{675}{1024r^5(1-r^2)^{6}}\left(W_{61}(r)I^2(r)+W_{62}(r)I(r)J(r)+W_{63}(r)J^2(r)\right),\\
\end{split}\end{equation}
where
\begin{equation*}\begin{split}
W_{51}(r)=&-r(3840-3760r^2+2082r^4-2649r^6+351r^8)\\
       &+2(1+r)^{\frac{3}{2}}(-384+2496r-744r^2-1724r^3+702r^4+573r^5-1512r^6+525r^7)\\
       &+2(1- r)^{\frac{3}{2}}(384+2496r+744r^2-1724r^3-702r^4+573r^5+1512r^6+525r^7)\\
       &-2r\sqrt{1-r^2}(1920-920r^2+917r^4-1125r^6+36 r^8),\\
W_{52}(r)=&\sqrt{1-r^2}\left(-2r(-1920+2360r^2-1243r^4+164r^6+27r^8)\right.\\
       &+2\sqrt{1-r}(1+r)^2(384-2496r+1032r^2+812r^3-1098r^4+735r^5-315r^6)\\
       &+2\sqrt{1+r}(1-r)^2(-384-2496r-1032r^2+812r^3+1098r^4+735r^5+315r^6)\\
       &\left.-r\sqrt{1-r^2}(-3840+2800r^2-1374r^4-729r^6+27r^8)\right),\\
W_{61}(r)=&r(21760-42320r^2+7456r^4+25161r^6-13566r^8+1845r^{10})\\
       &-2\sqrt{1-r}(1+r)^2(768+9728r-1808r^2-16608r^3-2904r^4+7416r^5+6531r^6\\
       &-1764r^7-945r^8+630r^9)-2\sqrt{1+r}(1-r)^2(-768+9728r+1808r^2\\
       &-16608r^3+2904r^4+7416r^5-6531r^6-1764r^7+945r^8+630 r^9)\\
       &+5r\sqrt{1-r^2}(4352-6288r^2-1160r^4+4143r^6-549r^8+54r^{10}),\\
W_{62}(r)=&2\sqrt{1-r^2}\left(5r(-2816+3632r^2+992r^4-1391r^6-492r^8+27r^{10})\right.\\
       &+2(1-r)^{\frac{5}{2}}(-768+5120r+6160r^2-3520r^3-744r^4+4440r^5\\
       &+2277r^6+630r^7-315r^8)+2(1+r)^{\frac{5}{2}}(768+5120r-6160r^2\\
       &-3520r^3+744r^4+4440r^5-2277r^6+630r^7+315r^8)\\
      \end{split}\end{equation*}       
\begin{equation*}\begin{split}
 &\left.-r\sqrt{1-r^2}(14080-11120r^2-8504r^4+1233r^6+570r^8+45r^{10})\right),\\       
W_{63}(r)=&(1-r^2)\left(r(6400+2160r^2-17904r^4+10049r^6-2910r^8+45r^{10})\right.\\
       &+2\sqrt{1-r}(1+r)^3(-768-1280r+784r^2-2000r^3+4584r^4-720r^5\\
\end{split}\end{equation*}
\begin{equation*}\begin{split}
&-1155r^6+525r^7)+2\sqrt{1+r}(1-r)^3(768-1280r-784r^2-2000r^3\\
       &-4584r^4-720r^5+1155r^6+525r^7)\\
       &\left.-5r\sqrt{1-r^2}(-1280-1072r^2+2936r^4-935r^6+309r^8+18r^{10})\right),
\end{split}\end{equation*}
and $I(r)$, $J(r)$ are complete elliptic integrals defined in \eqref{IJ}.

Taking account of the complexity of $W_3$, $W_4$, $W_5$ and $W_6$, we do a transformation
\begin{equation}\label{rs}
r=\dfrac{-1+6s^2-s^4}{(1 + s^2)^2},\quad \quad s\in(\sqrt{2}-1,1),
\end{equation}
and simplify them to
\begin{equation}\begin{split}\label{W3W6}
W_3(s)=&\frac{3(s-\sqrt{2}+1)^4}{8 \left(s-s^3\right)\left(1+s^2\right)^5}\left(-24-17\sqrt{2}+8(3+2\sqrt{2})s+(80+57\sqrt{2})s^2+8(2+\sqrt{2}) s^3\right.\\
&-18(16+11\sqrt{2})s^4-80 (4 + 3 \sqrt{2}) s^5-18(8+5\sqrt{2})s^6-8 (10 + 7 \sqrt{2}) s^7\\
&\left. - (8+9 \sqrt{2}) s^8 -8 (3 + 2 \sqrt{2}) s^9 + \sqrt{2} s^{10}\right),\\
W_4(s)=&\frac{15(s+\sqrt{2}+1) (s -\sqrt{2}+1)^7}{
 64 (-s + s^3)^3 (1 + s^2)^6} \left(181 + 128 \sqrt{2} +
   2 (601 + 425 \sqrt{2}) s + 6 (383 \right.\\
   &+ 271 \sqrt{2}) s^2+
   2 (1433 + 1015 \sqrt{2}) s^3 + 6 (1006 + 711 \sqrt{2}) s^4 +
   6 (2087 + 1477 \sqrt{2}) s^5\\
   &+ 2 (8267 + 5839 \sqrt{2}) s^6 +
   22 (439 + 311 \sqrt{2}) s^7 + 54 (7 + 5 \sqrt{2}) s^8 -
   22 (79 + 59 \sqrt{2}) s^9\\
   &  - 2 (973 + 629 \sqrt{2}) s^{10} -
   6 (167 + 133 \sqrt{2}) s^{11} - 6 (54 + 31 \sqrt{2}) s^{12}\\
   &\left. -
   2 (233 + 175 \sqrt{2}) s^{13} + 6 (23 + 19 \sqrt{2}) s^{14} -
   2 (1 + 5 \sqrt{2}) s^{15} + (1 + 2 \sqrt{2}) s^{16}\right),\\
W_5(s)=&Z_5(s)\left(Z_{51}(s)I\begin{small}\left(r\right)\end{small}
  +Z_{52}(s)J\left(r\right)\right)|_{r=(-1+6s^2-s^4)/(1 + s^2)^2},\\
W_6(s)=&Z_6(s)\left(Z_{61}(s)I^2(r)+Z_{62}(s)I(r)J(r)+Z_{63}(s)J^2(r)\right)|_{r=(-1+6s^2-s^4)/(1 + s^2)^2},
\end{split}\end{equation}
where $Z_5(s)$ and $Z_6(s)$ are rational functions, and $Z_{51}(s)$, $Z_{52}(s)$, $Z_{61}(s)$, $Z_{62}(s)$ and $Z_{63}(s)$ are polynomials. Since their expressions are cumbersome,  we place them in Appendix given by \eqref{W5W6}. Under the transformation \eqref{rs}, $W_i(s)=W_i(r),i=3,4,5,6$.

It is easy to judge that neither $W_3(s)$ nor $W_4(s)$ vanishes in $s\in(\sqrt{2}-1,1)$ by Sturm's Theorem. The difficulties mainly focus on the determination of $W_5(s)$ and $W_6(s)$.

From the definition of $I(r)$ in \eqref{IJ} and the transformation \eqref{rs}, we know that
\begin{equation}\label{I}
I(r)|_{r=(-1+6s^2-s^4)/(1 + s^2)^2}>0.
\end{equation}
Define
\begin{equation}\label{v}
v(s)=\dfrac{J(r)}{I(r)}\bigg{|}_{r=(-1+6s^2-s^4)/(1 + s^2)^2}, \quad s\in(\sqrt{2}-1,1).
\end{equation}
\begin{proposition}\label{W5}
$W_5(s)$ does not vanish in $s\in(\sqrt{2}-1,1)$.
\end{proposition}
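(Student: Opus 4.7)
Using the positivity of $I(r)$ on the interval guaranteed by (\ref{I}) and the definition of $v$ in (\ref{v}), the expression for $W_5$ in (\ref{W3W6}) factors as
\[
 W_5(s)=Z_5(s)\,I(r)\,\Psi(s),\qquad \Psi(s):=Z_{51}(s)+Z_{52}(s)\,v(s),
\]
with $r=(-1+6s^2-s^4)/(1+s^2)^2$. Since $I(r)>0$, the proposition reduces to showing that neither $Z_5(s)$ nor $\Psi(s)$ vanishes on $L:=(\sqrt 2-1,1)$. The rational factor $Z_5$ is handled exactly as $W_3$ and $W_4$ were: its numerator and denominator are polynomials in $s$, and Sturm's theorem applied to them yields that $Z_5$ has constant sign on $L$.

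The substantive content is ruling out zeros of the transcendental factor $\Psi$. My plan is to exploit the Legendre differential equations
\[
 kE'(k)=E(k)-K(k),\qquad k(1-k^2)K'(k)=E(k)-(1-k^2)K(k),\quad k=\tfrac{2r}{1+r},
\]
combined with the change of variable (\ref{rs}), to derive a Riccati-type ODE
\[
 v'(s)=A(s)\,v(s)^2+B(s)\,v(s)+C(s),
\]
with $A,B,C$ explicit rational functions of $s$. After a preliminary Sturm check that $Z_{52}$ does not vanish on $L$, one may write $\Psi(s)=Z_{52}(s)\bigl(v(s)-v^{\ast}(s)\bigr)$ with $v^{\ast}:=-Z_{51}/Z_{52}$, so that any zero $s_0\in L$ of $\Psi$ forces $v(s_0)=v^{\ast}(s_0)$. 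Substituting this value into the Riccati equation replaces $v'(s_0)$ by an explicit rational function of $s_0$ only, and the same substitution turns $\Psi'(s_0)$ into the evaluation at $s_0$ of a rational function $R(s)$ whose numerator is an explicit polynomial. A Rolle argument applied to $\Psi$ on $L$, together with the endpoint sign of $\Psi$ computed from the known asymptotics of $E$ and $K$ (recalling that $v$ inherits a logarithmic divergence from $K(k)$ as $k\to 1^-$, i.e.\ as $s\to 1^-$), then produces a contradiction provided the numerator of $R$ has no zero on $L$, which is again checked by Sturm.

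The principal obstacle is algebraic bulk: the polynomials $Z_{51},Z_{52}$ in the appendix are already large, and the coefficients $A,B,C$ of the Riccati equation together with the final polynomial $R(s)$ are substantially larger. The choice of transformation (\ref{rs}) is made precisely so that large common factors — the factor $(s-\sqrt 2+1)^4$ already visible in $W_3$ and $(s-\sqrt 2+1)^7$ in $W_4$, originating from the zeros of $W_i$ as $r\to 0^+$, i.e.\ $s\to(\sqrt2-1)^+$ — can be stripped before applying Sturm. I expect a similar clean factorization of $Z_{51}$, $Z_{52}$ and of the numerator of $R$ to be what makes the final Sturm verifications actually feasible; verifying the endpoint sign of $\Psi$ rigorously, via the logarithmic expansion of $K$ near modulus one, will be the delicate supplementary step.
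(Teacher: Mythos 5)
Your plan is essentially the paper's own proof: after factoring out $I(r)$ and the sign-definite rational/polynomial parts (with $Z_{52}<0$ checked by Sturm), the paper studies $U=Z_{51}/Z_{52}+v$, derives the same Riccati equation for $v$, shows that at any zero of $U$ the derivative reduces to a rational expression whose sign is fixed by a Sturm check on a degree-56 polynomial $\bar Z$, and combines this with the asymptotics of $U$ at both endpoints (a second-order Taylor expansion at $s=\sqrt2-1$, where $U$ vanishes, and the logarithmic/pole behaviour from $K$ as $s\to1^-$) to rule out zeros by exactly your Rolle-type counting argument. The approach is correct and matches the paper.
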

\begin{proof}
It follows from Sturm's Theorem that $Z_{52}(s)<0$ in $(\sqrt{2}-1,1)$. Combining \eqref{I},
\[
W_5(s)=Z_5(s)Z_{52}(s)I\left(\frac{-1+6s^2-s^4}{(1 + s^2)^2}\right)\left(\frac{Z_{51}(s)}{Z_{52}(s)}+v(s)\right).
\]
It suffices to consider
\[
U(s)=\dfrac{Z_{51}(s)}{Z_{52}(s)}+v(s).
\]
Using
\begin{equation}\label{v/s}
\dfrac{\mathrm{d}v}{\mathrm{d}s}=\dfrac{(1 + s^2)^4 - 32 s^2 (1-s^2)^2v +16 s^2 (1-s^2)^2v^2}{2 s (-1 + s^4) (1 - 6 s^2 + s^4)},
\end{equation}
which can be obtained from \eqref{IJ} and \eqref{v} by direct computations, we have
\begin{equation}\begin{split}\label{dU}
\dfrac{\mathrm{d}U}{\mathrm{d}s}=&-\frac{8 (s-s^3) U^2}{(1 + s^2) (1 - 6 s^2 + s^4)}+\frac{
 16 (s-s^3)(Z_{51}(s) +Z_{52}(s))U}{(1 + s^2) ( 1 - 6 s^2 + s^4 ) Z_{52}(s)}\\
 &-\frac{36(1-2 s-s^2) (1 + s^2)^3 (1 + 2 s - s^2)^3 \bar{Z}(s)}{Z_{52}^2(s)},
\end{split}\end{equation}
where $\bar{Z}(s)$ is a polynomial of degree 56 given by \eqref{Ws} in Appendix, and
$\bar{Z}(s)<0$ in $(\sqrt{2}-1,1)$ by Sturm Theorem.

If $U(s)$ has a zero $s_0$ in $(\sqrt{2}-1,1)$, then from \eqref{dU} and $\bar{Z}(s)<0$, we know that $U'(s_0)<0$. However, the Taylor expansion of $U(s)$ near $s=\sqrt{2}-1$ is
\begin{equation}\begin{split}\label{s0}
U(s)=&-\dfrac{1}{5}\left(3+2\sqrt2\right)\left(s-(\sqrt2-1)\right)^2+O\left((s-(\sqrt2-1))^3\right),
\end{split}\end{equation}
and when $s\rightarrow1^-$,
\begin{equation}\begin{split}\label{s1}
U(s)=&\dfrac{1- 2 \sqrt{2}}{18(1-s)}-\dfrac{1}{2}\ln(1-s)+\cdots.
\end{split}\end{equation}
If $U(s)$ has zeros in $(\sqrt2-1,1)$, then it has at least two (taking into account their
multiplicity), and one of them satisfies $U'(s_0)>0$ or $U'(s_0)=0$ by \eqref{s0} and \eqref{s1}, which contradicts with $U'(s_0)<0$. Thus, we have $U(s)$ has none zeros in $(\sqrt2-1,1)$, i.e., $W_{5}(s)$ does not vanish in $(\sqrt2-1,1)$.
\end{proof}
\begin{proposition}\label{W6}
$W_6(s)$ does not vanish in $s\in(\sqrt{2}-1,1)$.
\end{proposition}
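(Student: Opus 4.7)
The plan is to parallel the proof of Proposition~\ref{W5}, adapted to the fact that $W_6$ is a \emph{quadratic} form in $I(r)$ and $J(r)$. First, after verifying via Sturm's Theorem that $Z_6(s)$ and $Z_{63}(s)$ each have constant sign on $(\sqrt{2}-1,1)$ and using the positivity of $I(r)$ there (cf.\ \eqref{I}), the vanishing of $W_6$ is equivalent to the vanishing of
\[
U(s) \;:=\; \frac{W_6(s)}{Z_6(s)\,Z_{63}(s)\,I^2(r)} \;=\; v^2(s) + P(s)\,v(s) + Q(s),
\]
where $P:=Z_{62}/Z_{63}$, $Q:=Z_{61}/Z_{63}$, and $v(s)=J(r)/I(r)$ is the function of \eqref{v}. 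An optimistic first attempt is to verify by Sturm's Theorem that the discriminant $Z_{62}^2-4Z_{61}Z_{63}$ is negative on $(\sqrt{2}-1,1)$; if so, $U>0$ throughout and we are done.

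Assuming the discriminant test fails, I would differentiate $U(s)$ using the Riccati ODE \eqref{v/s} for $v'$, and use $v^2 = U - Pv - Q$ to reduce $v^2$ and $v^3$ wherever they appear. After simplification, the result takes the form
\[
A(s)\,U'(s) \;=\; \bigl[\,2\gamma(s)\,v + c_0(s)\,\bigr]\,U(s) \;+\; M_1(s)\,v \;+\; M_0(s),
\]
for explicit rational functions $A,\gamma,c_0,M_0,M_1$ of $s$. At any hypothetical zero $s_0$ of $U$, this reduces to $A(s_0)\,U'(s_0) = M_1(s_0)\,v(s_0) + M_0(s_0)$, where $v(s_0)$ must be one of the two roots $v_\pm = (-P \pm \sqrt{P^2-4Q})/2$ of $U(s_0;v)=0$. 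Forming the product over both roots gives
\[
\bigl(M_1 v_+ + M_0\bigr)\bigl(M_1 v_- + M_0\bigr) \;=\; M_0^2 - M_0 M_1 P + M_1^2 Q,
\]
a rational function of $s$ alone whose numerator is a polynomial to which Sturm's Theorem can be applied. Establishing that this product is positive on $(\sqrt{2}-1,1)$ and checking the sign of $M_1 v + M_0$ at a single convenient test point would force $A(s_0)\,U'(s_0)$ to have a definite sign (say negative) at \emph{every} hypothetical zero $s_0$. This is the analog for $W_6$ of the inequality $\bar{Z}(s)<0$ used in the proof of Proposition~\ref{W5}.

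The boundary analysis is straightforward: at $s=\sqrt{2}-1$ one has $r\to 0^+$ and $v(s)\to J(0)/I(0) = 1$, so a Taylor expansion determines $U$ near this endpoint; as $s\to 1^-$, $r\to 1^-$ with $I(r)\to 2\sqrt{2}$ bounded but $J(r)\sim -\tfrac{1}{\sqrt{2}}\ln(1-r)\to +\infty$, hence $v(s)\to+\infty$ and $U(s)\sim v^2(s)\to +\infty$. Combining these limits with the sign-definite property of $U'$ at zeros, exactly as in the concluding paragraph of the proof of Proposition~\ref{W5}, yields a contradiction if $U$ has any zero in the open interval.

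The principal obstacle is purely computational. The polynomials $Z_{61},Z_{62},Z_{63}$ are already of high degree, so both the discriminant $Z_{62}^2-4Z_{61}Z_{63}$ and the norm $M_0^2-M_0M_1P+M_1^2Q$, after clearing denominators, will be polynomials of very high degree whose signs on $(\sqrt{2}-1,1)$ must be certified by Sturm's Theorem (possibly over $\mathbb{Q}[\sqrt{2}]$). A further rational substitution in $s$ that splits these polynomials into irreducible factors of lower degree may be needed to make the Sturm computation tractable; this, rather than any conceptual difficulty, is the real hurdle in making the outline above into a complete proof.
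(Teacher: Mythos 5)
Your setup coincides with the paper's: both reduce the problem to the quadratic $\Psi(s,v)=Z_{61}+Z_{62}v+Z_{63}v^2$ evaluated along $v=v(s)=J/I$, which obeys the Riccati equation \eqref{v/s}. Your first fallback (negative discriminant) indeed fails -- the paper checks $\Delta=Z_{62}^2-4Z_{61}Z_{63}>0$ on $(\sqrt{2}-1,1)$, so both branches $v_\pm$ are real there. The genuine gap is in your main mechanism. The quantity you propose to certify positive, $M_0^2-M_0M_1P+M_1^2Q$, is (after clearing denominators) exactly the resultant $\mathrm{Resultant}_v(\Psi,\Phi)$ up to a factor of fixed sign: $\Phi$ is cubic in $v$, its remainder modulo $\Psi$ is the linear form $M_1v+M_0$, and the product of that form over the two roots $v_\pm$ is the resultant divided by a power of the leading coefficient $Z_{63}$ (which is of one sign), while $\dot{s}$ and your $A(s)$ do not change sign on the open interval. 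The paper computes this resultant $R$ (formula \eqref{R}) and shows it has a unique \emph{simple} zero $s_0\in(\sqrt{2}-1,1)$; hence your product changes sign at $s_0$ and cannot be shown positive by Sturm's Theorem. On one side of $s_0$ the values $M_1v_++M_0$ and $M_1v_-+M_0$ have opposite signs, so "$U'$ has a definite sign at every hypothetical zero of $U$" is simply false as a tool here, and the one-branch argument that worked for Proposition~\ref{W5} (where the analogous polynomial $\bar{Z}$ genuinely has one sign) does not carry over. The paper's substitute is a two-branch phase-plane argument: it establishes $v_-(s)>v(s)>v_+(s)$ near both endpoints (near $s=\sqrt{2}-1$ this requires expanding $v$ and $v_+$ to fourth order, where they first differ), and then uses the uniqueness and simplicity of the zero of $R$, together with $\phi_3<0$, to bound by one the number of tangencies of the vector field \eqref{vs} with the curve $C=\{\Psi=0\}$; a crossing of $\Gamma=\{v=v(s)\}$ with $C$ would force at least two tangencies, a contradiction. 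That counting step is the missing idea in your outline.

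A secondary but real error is your boundary analysis at $s\to1^-$. Since $Z_{63}$ carries the factor $(1-s^2)^2$ and $Z_{62}$ the factor $(1-s^2)$ while $Z_{61}$ does not vanish at $s=1$, one has $Q=Z_{61}/Z_{63}\sim c\,(1-s)^{-2}$ and $P\sim c'(1-s)^{-1}$, which dominate $v^2\sim\tfrac14\ln^2(1-s)$. Concretely, $v_-(s)\sim C/(1-s)$ with $C>0$ and $v_+(s)\to-\infty$, so $U=(v-v_+)(v-v_-)\to-\infty$, not $+\infty$ as you claim. Since the endpoint signs of $U$ are precisely what the concluding contradiction hinges on, this must be corrected even in a repaired version of the argument.
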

\begin{proof}
Since $I(r)|_{r=(-1+6s^2-s^4)/(1 + s^2)^2}>0$, $W_6(s)$ can be expressed as
\[
W_6(s)=Z_6(s)I^2\left(\frac{-1+6s^2-s^4}{(1 + s^2)^2}\right)\left(Z_{61}(s)
+Z_{62}(s)v(s)+Z_{63}(s)v^2(s)\right).
\]
Let
\begin{equation}
\Psi(s,v)=Z_{61}(s)+Z_{62}(s)v+Z_{63}(s)v^2.
\end{equation}
Then the number of zeros of $W_6(s)$ in $(\sqrt{2}-1,1)$ equals the number of intersection points of the
curve $C=\{\Psi(s,v)=0\}$ and the curve $\Gamma=\{v=v(s)\}$
in the $(s,v)$-plane. Let $C_{+}$ and $C_{-}$ be two branches of the curve $C$, denoted by
\begin{equation}
C_{+}=\{v_{+}(s)=\dfrac{-Z_{62}+\sqrt{\Delta(s)}}{2Z_{63}}\}, \quad
C_{-}=\{v_{-}(s)=\dfrac{-Z_{62}-\sqrt{\Delta(s)}}{2Z_{63}}\},
\end{equation}
where $Z_{63}(s)<0$ and $\Delta(s)=Z_{62}^2-4Z_{61}Z_{63}>0$ in $s\in(\sqrt2-1,1)$ by Sturm Theorem, which are polynomials given
by \eqref{W5W6} and \eqref{delta}, respectively, in Appendix.

On one hand, note that
\begin{equation*}\begin{split}
\lim_{s\rightarrow(\sqrt2-1)^+}v_{-}(s)=13, \quad \lim_{s\rightarrow1^-}v_{+}(s)=-\infty,
\end{split}\end{equation*}
near $s=\sqrt2-1$,
\begin{equation*}\begin{split}
v_{+}(s)=&1+\frac{3 + 2\sqrt{2}}{2}\left(s-(\sqrt2-1)\right)^2-\frac{4 + 3\sqrt{2}}{4}\left(s-(\sqrt2-1)\right)^3\\ &+\frac{13+9\sqrt{2}}{8}\left(s-(\sqrt2-1)\right)^4+O\left((s-(\sqrt2-1))^5\right),\\
v(s)=&1+\frac{3 + 2\sqrt{2}}{2}\left(s-(\sqrt2-1)\right)^2-\frac{4 + 3\sqrt{2}}{4}\left(s-(\sqrt2-1)\right)^3\\ &+\frac{103+72\sqrt{2}}{32}\left(s-(\sqrt2-1)\right)^4+O\left((s-(\sqrt2-1))^5\right),
\end{split}\end{equation*}
and when $s\rightarrow1^-$,
\begin{equation*}\begin{split}
v_{-}(s)&=\dfrac{(181 + 128\sqrt{2})(35+
   2 \sqrt{35 (29 + 9\sqrt{2})})(\sqrt{2}-1)^6 }{630(1-s)}+\cdots,\\
v(s)&=-\dfrac{1}{2}\ln(1-s)+\cdots.
\end{split}\end{equation*}
Comparing these results, we have
\begin{equation}\begin{split}\label{0}
&v_{-}(s)>v(s)>v_{+}(s),\quad \mbox{near $s=\sqrt2-1$},\\
&v_{-}(s)>v(s)>v_{+}(s), \quad \mbox{near $s=1$}.
\end{split}\end{equation}
On the other hand, from \eqref{v/s} we know that $v(s)$ defined in \eqref{v} is a solution of
\begin{equation}
\begin{split}\label{vs}
\dot{v}&=(1 + s^2)^4 - 32 s^2 (1-s^2)^2v +16 s^2 (1-s^2)^2v^2,\\
\dot{s}&=2 s (-1 + s^4) (1 - 6 s^2 + s^4).
\end{split}
\end{equation}
A direct calculation shows that
\begin{equation}\begin{split}
\Phi(s,v)=\left(\frac{\partial \Psi(s,v)}{\partial s},\frac{\partial \Psi(s,v)}{\partial v}\right)\cdot(\dot{s},\dot{v})=4 s (1 - s^2)\sum_{k=0}^{k=3}\phi_k(s)v^k,
\end{split}\end{equation}
where $\phi_k(s),k=0,1,2,3$ are polynomials given by \eqref{phi} in Appendix.

It follows from Sturm's Theorem that $\phi_3(s)<0$ in $(\sqrt2-1,1)$, and that the resultant $R=\mbox{Resultant}(\Psi,\Phi,v)$ of $\Psi(s,v)$ and $\Phi(s,v)$ with respect to $v$, which is a polynomial in the variable $s$ of degree 250 given by \eqref{R} in Appendix, has a unique simple zero $s_0$ in $(\sqrt2-1,1)$. Thus $v_{-}(s)>v(s)>v_{+}(s)$ holds for all $s\in(\sqrt2-1,1)$ by \eqref{0}, otherwise, there will exist at least two points on the curve $C$ tangent to the vector field \eqref{vs}, which results in a contradiction. We have $W_6(s)$ does not vanish in $s\in(\sqrt2-1,1)$.
\end{proof}
\noindent{\bfseries{Proof of Theorem \ref{th:S4}.}} Using \eqref{W1W6}, \eqref{W3W6}, Propositions \ref{W5} and \ref{W6}, we have $W_1(r), W_2(r)$, $r\in(0,1)$ and $W_3(s)$, $W_4(s), W_5(s)$, $W_6(s), s\in(\sqrt2-1,1)$ do not vanish, where $W_i(s), s\in(\sqrt2-1,1)$ are equal to $W_i(r), r\in(0,1)$, $i=3,4,5,6$, under the transformation \eqref{rs}. Thus $(g_1, g_2, \cdots, g_6)$ defined in \eqref{g} is an ECT-system by Lemma \ref{ECT3}, which implies that $G(r)$ given in \eqref{G} has at most 5 zeros in $(0,1)$, and this number is realizable
by Remark \ref{ECT2}. Note that $f(r)=F(r)/r$ in $(0,1)$ and $F(r)$ has a relation with $G(r)$ in \eqref{F}, we know that the
averaged function $f(r)$ given by \eqref{fr} has at most 5 zeros in $(0,1)$. Theorem \ref{th:S4} follows from Theorem \ref{th:average}.
\section{Acknowledgements}
We appreciate the helpful suggestion of Professor Changjian Liu. The research is supported by the NSF of China (No. 11401111, No. 11571379 and No. 11571195).

\vspace{250pt}
\section*{Appendix}\label{sec:app}
\vspace{-20pt}
\begin{equation*}\begin{split}
Z_5(s)=&\frac{15(s -\sqrt{2}+1)^4}{131072 (s + \sqrt{2} + 1)^2 (1 + s^2)^2(-1 - 2 s + s^2)^3 (s - s^3)^7},\\
Z_{51}(s)=&(1 + s^2)^2 \left(17 (181 + 128 \sqrt{2})+10(13270+9383\sqrt{2})s+2 (458575 + 324203 \sqrt{2}) s^2\right.\\
&+6 (573334 + 405377 \sqrt{2}) s^3 +2 (5125412 + 3624289 \sqrt{2}) s^4 +2 (10738190\\
& + 7593409 \sqrt{2}) s^5 +2 (14254381 + 10078037 \sqrt{2}) s^6 +18 (2125642 + 1501057 \sqrt{2}) s^7\\
& +6 (19300116 + 13648361 \sqrt{2}) s^8 +2 (129946918 + 91939979 \sqrt{2}) s^9\\
&+6 (34318349 + 24275769 \sqrt{2}) s^{10} -6 (23220354 + 16456951 \sqrt{2}) s^{11}\\
& -78 (3894648 + 2757089 \sqrt{2}) s^{12} +2 (10979494 + 7760369 \sqrt{2}) s^{13}\\
&+2 (175579757 + 124112365 \sqrt{2}) s^{14} +6 (26735878 + 19250787 \sqrt{2}) s^{15}\\
&-38206562 (7 + 5 \sqrt{2}) s^{16} -6 (48258258 + 34316453 \sqrt{2}) s^{17}-2 (6664843\\
& + 3458855 \sqrt{2}) s^{18} +2 (518246 + 288049 \sqrt{2}) s^{19} -78 (422308 + 326451 \sqrt{2}) s^{20}\\
& +6 (5158094 + 3813369 \sqrt{2}) s^{21} +6 (1091109 + 1016701 \sqrt{2}) s^{22} -2 (6852178\\
   & + 5773661 \sqrt{2}) s^{23} +6 (59056 + 179619 \sqrt{2}) s^{24} +18 (290578 + 190297 \sqrt{2}) s^{25}\\
   & -2 (258539 + 81007 \sqrt{2}) s^{26} - 2 (-3550 + 74191 \sqrt{2}) s^{27} +2 (-15328 \\
   &+ 25771 \sqrt{2}) s^{28} + 6 (7286 + 1057 \sqrt{2}) s^{29} -2 (10505 + 4153 \sqrt{2}) s^{30} - 10 (-110\\
   & \left.+ 17 \sqrt{2}) s^{31} +17 (1 + 2 \sqrt{2}) s^{32}\right),\\
Z_{52}(s)=&4 (1 - s) s (1 + s)\left (-153 (99 + 70 \sqrt{2}) -2 (69529 + 49157 \sqrt{2}) s - 2 (456914\right.\\
   & + 323083 \sqrt{2}) s^2 -2 (1965217 + 1389569 \sqrt{2}) s^3 -6 (1545650 + 1092881 \sqrt{2}) s^4\\
   & -2 (7529377 + 5323013 \sqrt{2}) s^5 -6 (4991094 + 3527921 \sqrt{2}) s^6 -2 (36612329\\
   & + 25895905 \sqrt{2}) s^7 -2 (60820588 + 43006727 \sqrt{2}) s^8 -6 (21189071\\
   & + 14974803 \sqrt{2}) s^9 -2 (75631610 + 53466727 \sqrt{2}) s^{10} -2 (99767893\\
   & + 70613285 \sqrt{2}) s^{11} -6 (20320850 + 14367253 \sqrt{2}) s^{12} + 2 (42604091\\
   & + 30195415 \sqrt{2}) s^{13} +2 (97432918 + 68663777 \sqrt{2}) s^{14} +2 (26485651\\
   & + 19489163 \sqrt{2}) s^{15} -27681242 (7 + 5 \sqrt{2}) s^{16} -2 (106403371 + 75431567 \sqrt{2}) s^{17}\\
   & -2 (32930102 + 22590337 \sqrt{2}) s^{18} - 2 (9553091 + 7059715 \sqrt{2}) s^{19} +6 (348730\\
   & + 101453 \sqrt{2}) s^{20} +2 (8838493 + 6962705 \sqrt{2}) s^{21} +2 (2187610 + 1006727 \sqrt{2}) s^{22}\\
   & - 6 (1245609 + 729473 \sqrt{2}) s^{23} -2 (-296432 + 224813 \sqrt{2}) s^{24} +2 (806129\\
   & + 831565 \sqrt{2}) s^{25} +6 (209366 + 112401 \sqrt{2}) s^{26} -2 (186503 + 78103 \sqrt{2}) s^{27}\\
    &+ 6 (16010 + 281 \sqrt{2}) s^{28} +2 (-16823 + 2141 \sqrt{2}) s^{29} - 2 (-2866 + 1237 \sqrt{2}) s^{30}\\
    &\left. -2 (1391 + 487 \sqrt{2}) s^{31} + 153 s^{32}\right),
\end{split}\end{equation*}
\begin{equation*}\begin{split}
Z_6(s)=&\frac{675 (s -\sqrt{2}+1)^2(1 + s^2)^6}{1073741824(s + \sqrt{2} + 1)^4(1+2s-s^2)^5(s-s^3)^{12}},\\
Z_{61}(s)=&(1 + s^2)^4 \left(21 (99 + 70 \sqrt{2}) - 6 (7263 + 5137 \sqrt{2}) s -18 (68094 + 48149 \sqrt{2}) s^2\right.\\
    &-2 (5905159 + 4175447 \sqrt{2}) s^3 - 2 (21587809 + 15265427 \sqrt{2}) s^4 -2 (36235649\\
    &+ 25634773 \sqrt{2}) s^5 -2 (38183186 + 27005059 \sqrt{2}) s^6 -2 (104249347\\
    & + 73692677 \sqrt{2}) s^7 -15 (64399407 + 45545704 \sqrt{2}) s^8 -24 (86877585\\
  & + 61428647 \sqrt{2}) s^9 -24 (38235726 + 27025957 \sqrt{2}) s^{10} +24 (28473969 \\
  &+ 20139277 \sqrt{2}) s^{11} -24 (242172657 + 171545851 \sqrt{2}) s^{12} -24 (507325001 \\
  &+ 359095513 \sqrt{2}) s^{13} +120 (16185366 + 11931665 \sqrt{2}) s^{14} +8 (2222062699 \\
   &+ 1572249221 \sqrt{2}) s^{15} +2 (4397159743 + 3033158714 \sqrt{2}) s^{16} +28 (343153879 \\
  & + 242932601 \sqrt{2}) s^{17} +4 (4783175926 + 3446081753 \sqrt{2}) s^{18} -4 (4438683541\\
  & + 3150109037 \sqrt{2}) s^{19} -6070512468 (7 + 5 \sqrt{2}) s^{20} +4 (1585594621\\
   & + 1152946793 \sqrt{2}) s^{21} +
   4 (8917028746 + 6339778727 \sqrt{2}) s^{22} -
   28 (38330119 \\&+ 29555969 \sqrt{2}) s^{23} -
   2 (10676594597 + 7518469324 \sqrt{2}) s^{24} -
   8 (130683739 \\&+ 108283949 \sqrt{2}) s^{25} +
   120 (68081866 + 48259215 \sqrt{2}) s^{26} +
   24 (48196721\\& + 37705717 \sqrt{2}) s^{27} -
   24 (41326097 + 30953259 \sqrt{2}) s^{28} -
   24 (575849 \\&+ 610593 \sqrt{2}) s^{29} +
   24 (1702894 + 931077 \sqrt{2}) s^{30} +
   24 (-870335 + 5103 \sqrt{2}) s^{31}\\& -
   15 (857267 + 1066206 \sqrt{2}) s^{32} -
   2 (3710573 + 1879267 \sqrt{2}) s^{33} -
   2 (572846\\& + 677821 \sqrt{2}) s^{34} +
   2 (1538969 + 1347097 \sqrt{2}) s^{35} -
   2 (-33311 + 130643 \sqrt{2}) s^{36}\\& +
   2 (-48161 + 8123 \sqrt{2}) s^{37} - 18 (-446 + 171 \sqrt{2}) s^{38} +
   6 (143 + 153 \sqrt{2}) s^{39}\\&\left. - 21 s^{40}\right),\\
\end{split}\end{equation*}
\begin{equation*}\begin{split}
Z_{62}(s)=&8 s(1-s^2)(1 + s^2)^2\left(15 (181 + 128 \sqrt{2}) +
   6 (22706 + 16055 \sqrt{2}) s + 6 (296263\right.\\& + 209487 \sqrt{2}) s^2 +
   6 (1214926 + 859127 \sqrt{2}) s^3 +
   2 (6723205 + 4756271 \sqrt{2}) s^4\\& +
   2 (7540306 + 5332541 \sqrt{2}) s^5 +
   2 (27655109 + 19550617 \sqrt{2}) s^6 +
   2 (133081238\\& + 94111093 \sqrt{2}) s^7 +
   9 (60326011 + 42667278 \sqrt{2}) s^8 +
   8 (45317174 \\&+ 32063287 \sqrt{2}) s^9 +
   24 (12478503 + 8804773 \sqrt{2}) s^{10} +
   24 (73177190\\& + 51754977 \sqrt{2}) s^{11} +
   24 (149240653 + 105605159 \sqrt{2}) s^{12} +
   8 (440708362\\& + 311224523 \sqrt{2}) s^{13} +
   24 (159064331 + 112637773 \sqrt{2}) s^{14} +
   8 (662194054\\& + 467790575 \sqrt{2}) s^{15} +
   2 (717532091 + 505999012 \sqrt{2}) s^{16} -
   4 (2196382646\\& + 1551096913 \sqrt{2}) s^{17} -
   4 (3614754613 + 2556199193 \sqrt{2}) s^{18} -
   12 (492996606\\ & + 351646643 \sqrt{2}) s^{19} +
   836472612 (7 + 5 \sqrt{2}) s^{20} +
   12 (423866026 \\&+ 303255237 \sqrt{2}) s^{21} -
   4 (7180333 + 30897197 \sqrt{2}) s^{22} -
   4 (288314134\\
\end{split}\end{equation*}
\begin{equation}\begin{split}\label{W5W6}
 & + 188190833 \sqrt{2}) s^{23} -
   2 (195815329 + 133344182 \sqrt{2}) s^{24} +
   8 (66530846\\ & + 42316855 \sqrt{2}) s^{25} +
   24 (21919451 + 16636357 \sqrt{2}) s^{26} +
   8 (58694618\\& + 38357563 \sqrt{2}) s^{27} +
   24 (9897613 + 8065031 \sqrt{2}) s^{28} -
   24 (1154970\\& + 1339423 \sqrt{2}) s^{29} -
   24 (2703577 + 1822683 \sqrt{2}) s^{30} -
   8 (2459954 + 2063233 \sqrt{2}) s^{31}\\& +
   9 (1143831 + 1239752 \sqrt{2}) s^{32} -
   2 (510458 + 1311547 \sqrt{2}) s^{33} -
   2 (769411\\& + 346547 \sqrt{2}) s^{34} -
   2 (65446 + 100139 \sqrt{2}) s^{35} +
   2 (280645 + 246479 \sqrt{2}) s^{36}\\& - 6 (106 + 8753 \sqrt{2}) s^{37} +
   6 (-1857 + 803 \sqrt{2}) s^{38} - 6 (-194 + 25 \sqrt{2}) s^{39}\\&\left. +
   15 (1 + 2 \sqrt{2}) s^{40}\right),\\
Z_{63}(s)=&16 s^2 (1-s^2)^2 \left(-135 (99 + 70 \sqrt{2}) -
   30 (7525 + 5321 \sqrt{2}) s - 6 (152746\right.\\& + 108015 \sqrt{2}) s^2 -
   6 (227183 + 160757 \sqrt{2}) s^3 +
   6 (432025 + 305483 \sqrt{2}) s^4\\& +
   2 (9742589 + 6888895 \sqrt{2}) s^5 +
   2 (11052254 + 7811245 \sqrt{2}) s^6 -
   2 (63240653 \\&+ 44719837 \sqrt{2}) s^7 + (-397483285 -
      281030456 \sqrt{2}) s^8 - 8 (10862657\\& + 7667785 \sqrt{2}) s^9 +
   248 (5120762 + 3620671 \sqrt{2}) s^{10} +
   24 (78243119 \\&+ 55256821 \sqrt{2}) s^{11} -
   24 (65939253 + 46640495 \sqrt{2}) s^{12} -
   8 (1174003525\\& + 829395239 \sqrt{2}) s^{13} -
   168 (72000814 + 50921581 \sqrt{2}) s^{14} -
   8 (65378599\\& + 48038387 \sqrt{2}) s^{15} +
   6 (1806045881 + 1280295926 \sqrt{2}) s^{16} +
   4 (476075279 \\&+ 336969379 \sqrt{2}) s^{17} -
   4 (2738422394 + 1941231319 \sqrt{2}) s^{18} -
   12 (307131301\\& + 218227703 \sqrt{2}) s^{19} +
   884115356 (7 + 5 \sqrt{2}) s^{20} +
   12 (145879621\\& + 105351527 \sqrt{2}) s^{21} -
   4 (668567654 + 492333001 \sqrt{2}) s^{22} -
   4 (44260439\\& + 34698991 \sqrt{2}) s^{23} +
   6 (442887421 + 326085004 \sqrt{2}) s^{24} +
   8 (252892879 \\&+ 179298383 \sqrt{2}) s^{25} -
   168 (940754 + 1179539 \sqrt{2}) s^{26} -
   8 (111015515 \\&+ 70118089 \sqrt{2}) s^{27} -
   24 (1683253 + 1661295 \sqrt{2}) s^{28} +
   24 (10113841\\& + 6593051 \sqrt{2}) s^{29} -
   248 (61498 + 6911 \sqrt{2}) s^{30} -
   8 (1913143 + 1275275 \sqrt{2}) s^{31}\\& + (6581375 +
      1814806 \sqrt{2}) s^{32} + 2 (-47467 + 418153 \sqrt{2}) s^{33} -
   2 (598846 \\&+ 344525 \sqrt{2}) s^{34} -
   2 (-71011 + 19375 \sqrt{2}) s^{35} + 6 (-2855 + 1067 \sqrt{2}) s^{36}\\& \left.+
   6 (14863 + 12133 \sqrt{2}) s^{37} - 6 (246 + 1265 \sqrt{2}) s^{38} +
   30 (-35 + 29 \sqrt{2}) s^{39} + 135 s^{40}\right).\\
\end{split}\end{equation}
\begin{equation}\label{Ws}
\begin{split}
\bar{Z}(s)=&-289(35839 + 25342 \sqrt{2})-68(625921+442593\sqrt{2})s -6 (101193745\\&
 + 71554784 \sqrt{2}) s^2-20 (639756151 + 452375917 \sqrt{2}) s^3 -6 (14433578283\\&
  + 10206081086 \sqrt{2}) s^4 - 4 (61523400599 + 43503613425 \sqrt{2}) s^5 -10 (26394629299\\&
  + 18663820476 \sqrt{2}) s^6 -4 (81220195657 + 57431327313 \sqrt{2}) s^7 -3 (1249507541447\\&
   + 883535234030 \sqrt{2}) s^8 -8 (1952782975722 + 1380826157423 \sqrt{2}) s^9\\& -
 4 (6888547193281 + 4870938333736 \sqrt{2}) s^{10} -8 (1765623286060\\& + 1248484278899 \sqrt{2}) s^{11} +
 4 (1060524382091 + 749904091846 \sqrt{2}) s^{12}\\& -8 (10187503437336 + 7203652478171 \sqrt{2}) s^{13} -
 4 (67196727862873\\& + 47515253841724 \sqrt{2}) s^{14} - 8 (30613232635258 + 21646822899983 \sqrt{2}) s^{15}\\& + (88705981222713 +62724554067730 \sqrt{2}) s^{16} -4 (26559971292821\\& + 18780806113083 \sqrt{2}) s^{17} -
 2 (758983649767561 + 536682308197872 \sqrt{2}) s^{18}\\& -12 (209656467942337 + 148249492741409 \sqrt{2}) s^{19} -
 6 (69368628477445\\& + 49051113886098 \sqrt{2}) s^{20} +4 (989266483060929 +
    699517028837821 \sqrt{2}) s^{21}\\& + (5907226373184438 +4177038704528408 \sqrt{2}) s^{22} +
 4 (711859947412323 \\&+ 503362800787333 \sqrt{2}) s^{23} - 5 (444135975055873 + 314053691717698 \sqrt{2}) s^{24}\\& -
 16 (296731916917516 + 209821170572357 \sqrt{2}) s^{25} -8 (435586202647283 \\&+ 308004476695976 \sqrt{2}) s^{26} -
 48 (15617742413006 + 11043341740883 \sqrt{2}) s^{27}\\& +5522071523832 (99 + 70 \sqrt{2}) s^{28} +
 48 (1935980053846 + 1368381215477 \sqrt{2}) s^{29}\\& -8 (41064076939363 + 29020973516804 \sqrt{2}) s^{30} -
 16 (544765504924\\& + 395911997797 \sqrt{2}) s^{31} +5 (59087344421887 + 41797084198718 \sqrt{2}) s^{32}\\& +
 4 (50008595927637 + 35387097717353 \sqrt{2}) s^{33} + (31251260699478 \\&+21884874985472 \sqrt{2}) s^{34} -
 4 (295092871209 + 172974346519 \sqrt{2}) s^{35}\\& +6 (2390136237115 + 1692584019198 \sqrt{2}) s^{36} +
 12 (489345890057\\& + 338456433011 \sqrt{2}) s^{37} - 2 (4935848951321 + 3462791906388 \sqrt{2}) s^{38}\\& -
 4 (1948144076339 + 1378504040823 \sqrt{2}) s^{39} + (1299188921913\\& + 915465226450 \sqrt{2}) s^{40} +
 8 (42096163298 + 28662109097 \sqrt{2}) s^{41} - 4 (226347584393\\& + 157627787656 \sqrt{2}) s^{42} +
 8 (8180322816 + 5416847189 \sqrt{2}) s^{43} -4 (3012605429\\& + 2168492186 \sqrt{2}) s^{44} -
 8 (2181018220 + 1605907699 \sqrt{2}) s^{45} -4 (2924338961\\& + 1819315324 \sqrt{2}) s^{46} -
 8 (1976638638 + 1468141303 \sqrt{2}) s^{47} - 3 (632591047\\& + 402233390 \sqrt{2}) s^{48} +
 4 (661956497 + 465143907 \sqrt{2}) s^{49} -10 (25294979\\& + 16934064 \sqrt{2}) s^{50} +
 4 (10999999 + 5558715 \sqrt{2}) s^{51} +6 (-221163 + 364306 \sqrt{2}) s^{52}\\& -
 20 (103489 + 96257 \sqrt{2}) s^{53} + 6 (16735 + 15484 \sqrt{2}) s^{54} -68 (439 + 333 \sqrt{2}) s^{55}\\& + 289 (1 + 2 \sqrt{2}) s^{56}.
\end{split}
\end{equation}
\begin{equation*}\begin{split}
\Delta(s)=&11520 s^2 (1-s^2)^2 (1 + s^2)^4 (1+2 s- s^2)^4 (390627 +
   276215 \sqrt{2} + 8 (479697\\& + 339197 \sqrt{2}) s +
   6 (799387 + 565252 \sqrt{2}) s^2 +
   6 (77084001 + 54506620 \sqrt{2}) s^3\\&  + (4935170703 +
      3489692688 \sqrt{2}) s^4 +
   2 (11909289353  + 8421139206 \sqrt{2}) s^5 \\& +
   2 (55180182851 + 39018281590 \sqrt{2}) s^6 + (575505934672 +
      406944145148 \sqrt{2}) s^7 \\& + (2113297274107 +
      1494326829313 \sqrt{2}) s^8 +
   4 (851541875150\\&  + 602131054387 \sqrt{2}) s^9 +
   4 (409164848135 + 289323230867 \sqrt{2}) s^{10}\\& + (7497187192104 +
      5301311414788 \sqrt{2}) s^{11} + (65072706732424\\& +
      46013348464164 \sqrt{2}) s^{12} +
   4 (44296248665194 + 31322183415247 \sqrt{2}) s^{13}\\& +
   4 (30236914005049 + 21380706750463 \sqrt{2}) s^{14} -
   36 (4037437677610\\& + 2854892242091 \sqrt{2}) s^{15} +
   100 (4180862840265 + 2956309394084 \sqrt{2}) s^{16} \\&+
   4 (596310623831302 + 421655642725771 \sqrt{2}) s^{17} +
   4 (687974927887557\\& + 486471275181883 \sqrt{2}) s^{18} -
   4 (353574626119520 + 250014848252183 \sqrt{2}) s^{19}\\& -
   4 (1050936777473819 + 743124176276455 \sqrt{2}) s^{20} +
   4 (1257191440592140\\& + 888966501074347 \sqrt{2}) s^{21} +
   4 (4646553272535371 + 3285620641347035 \sqrt{2}) s^{22}\\& +
   4 (553164210565510 + 391127178493017 \sqrt{2}) s^{23} -
   4 (12321436857957423 \\&+ 8712592044379198 \sqrt{2}) s^{24} -
   4 (14690543253852922 + 10387693072535489 \sqrt{2}) s^{25}\\& +
   20 (1124435991269047 +
      795090624671315 \sqrt{2}) s^{26} + (92504088406980200\\& +
      65409554729692564 \sqrt{2}) s^{27} + (78075824869764376 +
      55208610014942388 \sqrt{2}) s^{28}\\& +
   4 (13448850623812426 + 9510003149956667 \sqrt{2}) s^{29} -
   4 (6093684369943651\\& + 4309251971192341 \sqrt{2}) s^{30} -
   4 (57075032265120002 + 40358418086411519 \sqrt{2}) s^{31}\\&-
   2 (123045810817815239 + 87005122793249893 \sqrt{2}) s^{32} +
   4 (30386841536227474\\& + 21486947363073227 \sqrt{2}) s^{33} +
   20 (17726778591022006 \\&+ 12534545996157399 \sqrt{2}) s^{34} +
   28 (2993828147077073 + 2116872392697175 \sqrt{2}) s^{35}\\& -
   1895471885427634 (99 + 70 \sqrt{2}) s^{36} -
   28 (2322785292016873\\& + 1642349230904605 \sqrt{2}) s^{37} +
   20 (4972149139239326 + 3515200883825361 \sqrt{2}) s^{38}\\& +
   4 (5699952795134566 + 4031571485532787 \sqrt{2}) s^{39} -
   2 (38934011109465679 \\&+ 27526064428059847 \sqrt{2}) s^{40} -
   4 (7641926710147478 + 5405717188956199 \sqrt{2}) s^{41}\\& +
   4 (10157306186189269 + 7182519922073081 \sqrt{2}) s^{42} +
   4 (6366239451447214 \\&+ 4502096260405507 \sqrt{2}) s^{43} -
   4 (4606585487865346 + 3258051987873957 \sqrt{2}) s^{44}\\& -
   4 (4944939535256530 + 3495766010406259 \sqrt{2}) s^{45} +
   20 (157748975738447\\& +
      111504806546105 \sqrt{2}) s^{46} + (9945392349476168 +
      7030334533516124 \sqrt{2}) s^{47}\\& +
   4 (567617367920337 + 401810586777218 \sqrt{2}) s^{48} -
   4 (526303468130270\\
\end{split}\end{equation*}
\begin{equation}\begin{split}\label{delta}
   & + 372132796342383 \sqrt{2}) s^{49} -
   4 (313483174003229 + 221833702990975 \sqrt{2}) s^{50}\\& -
   4 (58017265637300 + 40979048784853 \sqrt{2}) s^{51} -
   4 (9608880993619\\& + 6756592336885 \sqrt{2}) s^{52} +
   4 (4653018198760 + 3277425508217 \sqrt{2}) s^{53}\\& +
   4 (12813482207997 + 9035681451337 \sqrt{2}) s^{54} +
   4 (9878641021618 \\&+ 7006765991651 \sqrt{2}) s^{55} +
   100 (196128025785 + 138532632416 \sqrt{2}) s^{56}\\& +
   36 (202968071090 + 143374448909 \sqrt{2}) s^{57} +
   4 (560290131089 + 395094153877 \sqrt{2}) s^{58}\\& + (616736716984 +
      442490670428 \sqrt{2}) s^{59} +
   4 (26308904186 + 18016329519 \sqrt{2}) s^{60}\\& -
   4 (3433626786 + 2360325463 \sqrt{2}) s^{61} + (914643580 +
      587708132 \sqrt{2}) s^{62}+
   4 (532792490\\&  + 407460587 \sqrt{2}) s^{63} + (161214947 +
      37758907 \sqrt{2}) s^{64} - 4 (30500828 + 16376993 \sqrt{2}) s^{65}\\&  -
   2 (1612349 + 3130730 \sqrt{2}) s^{66} -
   2 (1817833 + 855774 \sqrt{2}) s^{67} -
   3 (120619 + 144636 \sqrt{2}) s^{68}\\&  + 6 (2799 + 4760 \sqrt{2}) s^{69} -
   6 (853 + 632 \sqrt{2}) s^{70} -
   8 (57 + 23 \sqrt{2}) s^{71} + (27 + 5 \sqrt{2}) s^{72}).
\end{split}\end{equation}
\begin{equation*}\begin{split}
\phi_0(s)=&(1 + s^2)^4\left(3(9073 + 6417\sqrt{2}) + 6 (248308 + 175577\sqrt{2}) s +
 6 (3557810\right. \\&+ 2515683\sqrt{2}) s^2 +
 2 (50149748 + 35462575\sqrt{2}) s^3 + (172695877 +
    122185565\sqrt{2}) s^4\\& - 4 (1479014 + 1038139\sqrt{2}) s^5 -
 16 (5633018 + 4005649\sqrt{2}) s^6 +
 12 (180776524\\& + 127860795\sqrt{2}) s^7 + (5321803073 +
    3762624109\sqrt{2}) s^8 -
 2 (5883459724 \\&+ 4161916049\sqrt{2}) s^9 -
 2 (23816259518 + 16839545197\sqrt{2}) s^{10} -
 2 (10436003836 \\&+ 7356932573\sqrt{2}) s^{11} +
 3 (3085909949 + 2203596401\sqrt{2}) s^{12} -
 408 (458694391 \\&+ 325792078\sqrt{2}) s^{13} -
 480 (843450106 + 597011159\sqrt{2}) s^{14} -
 16 (17970722716\\& + 12543905561\sqrt{2}) s^{15} -
 2 (77639365337 + 55037084353\sqrt{2}) s^{16} +
 4 (81946369868\\& + 56878948051\sqrt{2}) s^{17} +
 4 (436036977998 + 308657296237\sqrt{2}) s^{18}\\& +
 12 (142872462716 + 101283760637\sqrt{2}) s^{19} -
 2 (223829242351\\& + 159679813967\sqrt{2}) s^{20} -
 24 (48478819642 + 34118961059\sqrt{2}) s^{21}\\& -
 64 (16799592844 + 11824003343\sqrt{2}) s^{22} -
 216 (6072830252+ 4383872847\sqrt{2}) s^{23}\\&  +
 2 (345671595401 + 248399077117\sqrt{2}) s^{24}+
 12 (176444104316\\&  + 126324762737\sqrt{2}) s^{25} -
 4 (64195344670 + 49431640397\sqrt{2}) s^{26} -
 4 (453920800364\\&  + 319842371881\sqrt{2}) s^{27} -
 2 (-825206959 + 1489184029\sqrt{2}) s^{28} +
 16 (62848691203\\&  + 44279656499\sqrt{2}) s^{29} +
 288 (431735606 + 338801465\sqrt{2}) s^{30} -
 48 (4754631844\\&  + 3458016283\sqrt{2}) s^{31} -
 3 (7584326715 + 5648352523\sqrt{2}) s^{32} +
 2 (10402471756\\&  + 7128027983\sqrt{2}) s^{33} -
 2 (1296277202 + 51451507\sqrt{2}) s^{34} -
 2 (1125780572\\&  + 1118430493\sqrt{2}) s^{35} + (-108689239 -
    157478351\sqrt{2}) s^{36} + 12 (11275830 \\
\end{split}\end{equation*}
\begin{equation*}\begin{split}
& + 11961031\sqrt{2}) s^{37} +
 16 (30370474 + 23716733\sqrt{2}) s^{38}+
 4 (8030852 - 552287\sqrt{2}) s^{39}\\&  + (-76238915 -
    54941431\sqrt{2}) s^{40} + 2 (-254492 + 2638187\sqrt{2}) s^{41} -
 6 (-374878\\&\left. + 40707\sqrt{2}) s^{42} + 6 (-30820 + 11749\sqrt{2}) s^{43} -
 3 (6711 + 7171\sqrt{2}) s^{44} + 504 s^{45}\right),\\
\phi_1(s)=&4 (1 + s^2)^2\left(-15 (181 + 128\sqrt{2}) - 6 (49867 + 35260\sqrt{2}) s -
 6 (969469 + 685511\sqrt{2}) s^2\right.\\& -
 6 (5305887 + 3752018\sqrt{2}) s^3 -
 2 (35136998 + 24857527\sqrt{2}) s^4 -
 6 (6092743\\& + 4309484\sqrt{2}) s^5 -
 2 (50201473 + 35440739\sqrt{2}) s^6 -
 6 (241851595 \\&+ 171033306\sqrt{2}) s^7 -
 130 (24810941 + 17550409\sqrt{2}) s^8 +
 6 (631987763\\& + 446728396\sqrt{2}) s^9 +
 2 (8720152075 + 6169934033\sqrt{2}) s^{10} +
 162 (48585445\\& + 34379702\sqrt{2}) s^{11} -
 2 (3203577146 + 2286774949\sqrt{2}) s^{12} +
 6 (6127189575 \\&+ 4343310236\sqrt{2}) s^{13} +
 62 (1614485699 + 1142045929\sqrt{2}) s^{14} +
 6 (28173421195\\& + 19894422154\sqrt{2}) s^{15} + (378349798135 +
    267994051562\sqrt{2}) s^{16} +
 12 (44187518665\\& + 31196995572\sqrt{2}) s^{17} +
 4 (34057927733 + 24006160303\sqrt{2}) s^{18} -
 12 (69608991835\\& + 49077924346\sqrt{2}) s^{19} -
 4 (476169765358 + 337108293995\sqrt{2}) s^{20}\\& -
 12 (157144547827 + 111432468284\sqrt{2}) s^{21} -
 36 (6673644773 + 4512236367\sqrt{2}) s^{22}\\& +
 12 (154899577273 + 109536852350\sqrt{2}) s^{23} +
 4 (691985193329\\& + 486697268587\sqrt{2}) s^{24} +
 12 (76932577283 + 55858249708\sqrt{2}) s^{25}\\& -
 12 (122933652271 + 87942310205\sqrt{2}) s^{26} -
 36 (30535411011\\& + 21874380250\sqrt{2}) s^{27} +
 4 (53003545238 + 43674114979\sqrt{2}) s^{28} +
 4236 (110547271\\& + 74223484\sqrt{2}) s^{29} +
 4 (34862053685 + 24734731711\sqrt{2}) s^{30} -
 12 (10988525013 \\&+ 7217628758\sqrt{2}) s^{31} + (-126949572797 -
    94000263628\sqrt{2}) s^{32} -
 102 (898095143\\& + 600037164\sqrt{2}) s^{33} -
 2 (18438488419 + 15161863385\sqrt{2}) s^{34} +
 6 (3567359045 \\&+ 2789149926\sqrt{2}) s^{35} +
 2 (11815170466 + 8692694045\sqrt{2}) s^{36} +
 18 (210070431\\& + 151822220\sqrt{2}) s^{37} -
 2 (2462862965 + 2172889663\sqrt{2}) s^{38} +
 114 (-4403149\\& + 230218\sqrt{2}) s^{39} +
 2 (416736491 + 321184675\sqrt{2}) s^{40} -
 6 (473705 + 8692964\sqrt{2}) s^{41}\\& -
 2 (109512961 + 82145939\sqrt{2}) s^{42} +
 18 (-290135 + 355182\sqrt{2}) s^{43} +
 2 (13584430 \\&+ 9831887\sqrt{2}) s^{44} -
 6 (61253 + 375252\sqrt{2}) s^{45} + 30 (-16793 + 6869\sqrt{2}) s^{46}\\&\left. -
 6 (-8879 + 1150\sqrt{2}) s^{47} + 705 (1 + 2\sqrt{2}) s^{48}\right),\\
 \phi_2(s)=&16s (1 - s^2)\left(135 (99 + 70\sqrt{2}) + 15 (22937 + 16219\sqrt{2}) s +
 24 (92181 + 65185\sqrt{2}) s^2\right.\\& + 3 (2699027 + 1909063\sqrt{2}) s^3 +
 6 (1435676 + 1015295\sqrt{2}) s^4 -(44085175\\&+
    31163579\sqrt{2}) s^5 -
 120 (444303 + 313891\sqrt{2}) s^6 + (817427887 +
    578000057\sqrt{2}) s^7\\
& +
 6 (474651565 + 335595393\sqrt{2}) s^8 + (483511397 +
    341440219\sqrt{2}) s^9\\
\end{split}\end{equation*}
\begin{equation*}\begin{split}
 & -
 24 (516591181 + 365240565\sqrt{2}) s^{10} + (-16014253381 -
    11311724237\sqrt{2}) s^{11} \\&+
 162 (202807832 + 143424883\sqrt{2}) s^{12} +
 3 (42950321273 + 30343273537\sqrt{2}) s^{13}\\& +
 24 (4670621819 + 3302349499\sqrt{2}) s^{14} -
 3 (75991971377 + 53641650067\sqrt{2}) s^{15}\\& -
 3 (203907137827 + 144265964148\sqrt{2}) s^{16} -
 6 (60672645767 + 42953256077\sqrt{2}) s^{17}\\& +
 48 (7856365753 + 5575559445\sqrt{2}) s^{18} +
 2 (375512954597 + 265234148329\sqrt{2}) s^{19}\\& +
 12 (41689439668 + 29404697227\sqrt{2}) s^{20} -
 2 (117746827819 + 83080481543\sqrt{2}) s^{21}\\& -
 48 (18117052119 + 12784457243\sqrt{2}) s^{22} -
 2 (54218402077 + 37855534259\sqrt{2}) s^{23}\\& +
 12 (87082202767 + 61375066769\sqrt{2}) s^{24} +
 2 (183341325277 \\&+ 129927057659\sqrt{2}) s^{25} -
 48 (12228124589 + 8643200357\sqrt{2}) s^{26}\\& -
 2 (39611650205 + 29751577981\sqrt{2}) s^{27} +
 12 (32056183104 + 22979177857\sqrt{2}) s^{28}\\& +
 2 (9897688051 + 10603248563\sqrt{2}) s^{29} -
 48 (6624564597 + 4938170005\sqrt{2}) s^{30}\\& -
 6 (36512886953 + 25854919283\sqrt{2}) s^{31} +
 3 (17631777425 + 16236292934\sqrt{2}) s^{32}\\& +
 3 (44707057633 + 28989406883\sqrt{2}) s^{33} +
 24 (62397481 + 41714885\sqrt{2}) s^{34}\\& -
 3 (14738319953 + 9503890813\sqrt{2}) s^{35} -
 18 (-72163508 + 22334359\sqrt{2}) s^{36}\\& + (6278132509 +
    4122349817\sqrt{2}) s^{37} -
 24 (45993239 + 9990027\sqrt{2}) s^{38}\\& - (274788485+
    311602723\sqrt{2}) s^{39} +
 6 (47884229 + 21948621\sqrt{2}) s^{40} + (-17527951\\& +
    23723071\sqrt{2}) s^{41} -
 168 (126831 + 85351\sqrt{2}) s^{42}-(10964369+
    12180841\sqrt{2}) s^{43}\\& + 6 (-23720 + 224653\sqrt{2}) s^{44} +
 3 (698677 + 475277\sqrt{2}) s^{45} - 120 (461 + 1389\sqrt{2}) s^{46}\\& \left.+
 15 (-1577 + 1301\sqrt{2}) s^{47} + 3105 s^{48}\right),\\
 \phi_3(s)=&128 s^3(1 - s^2)^3 \left(-135 (99 + 70\sqrt{2}) - 30 (7525 + 5321\sqrt{2}) s -
 6 (152746\right.\\& + 108015\sqrt{2}) s^2 - 6 (227183 + 160757\sqrt{2}) s^3 +
 6 (432025 + 305483\sqrt{2}) s^4 \\&+
 2 (9742589 + 6888895\sqrt{2}) s^5 +
 2 (11052254 + 7811245\sqrt{2}) s^6 -
 2 (63240653\\& + 44719837\sqrt{2}) s^7 + (-397483285 -
    281030456\sqrt{2}) s^8 - 8 (10862657\\& + 7667785\sqrt{2}) s^9 +
 248 (5120762 + 3620671\sqrt{2}) s^{10} +
 24 (78243119\\& + 55256821\sqrt{2}) s^{11} -
 24 (65939253 + 46640495\sqrt{2}) s^{12} -
 8 (1174003525\\& + 829395239\sqrt{2}) s^{13} -
 168 (72000814 + 50921581\sqrt{2}) s^{14} -
 8 (65378599\\& + 48038387\sqrt{2}) s^{15} +
 6 (1806045881 + 1280295926\sqrt{2}) s^{16} +
 4 (476075279\\& + 336969379\sqrt{2}) s^{17} -
 4 (2738422394 + 1941231319\sqrt{2}) s^{18} -
 12 (307131301\\& + 218227703\sqrt{2}) s^{19} +
 884115356 (7 + 5\sqrt{2}) s^{20} +
 12 (145879621\\& + 105351527\sqrt{2}) s^{21} -
 4 (668567654 + 492333001\sqrt{2}) s^{22} -
 4 (44260439\\& + 34698991\sqrt{2}) s^{23} +
 6 (442887421 + 326085004\sqrt{2}) s^{24} +
 8 (252892879\\& + 179298383\sqrt{2}) s^{25} -
 168 (940754 + 1179539\sqrt{2}) s^{26} -
 8 (111015515\\
\end{split}\end{equation*}
\begin{equation}\begin{split}\label{phi}
& + 70118089\sqrt{2}) s^{27} -
 24 (1683253 + 1661295\sqrt{2}) s^{28} +
 24 (10113841 \\&+ 6593051\sqrt{2}) s^{29} -
 248 (61498 + 6911\sqrt{2}) s^{30} -
 8 (1913143 + 1275275\sqrt{2}) s^{31}\\& + (6581375 +
    1814806\sqrt{2}) s^{32} + 2 (-47467 + 418153\sqrt{2}) s^{33} -
 2 (598846\\& + 344525\sqrt{2}) s^{34} - 2 (-71011 + 19375\sqrt{2}) s^{35} +
 6 (-2855 + 1067\sqrt{2}) s^{36} + 6 (14863 \\&\left.+ 12133\sqrt{2}) s^{37} -
 6 (246 + 1265\sqrt{2}) s^{38} + 30 (-35 + 29\sqrt{2}) s^{39}+ 135 s^{40}\right).
\end{split}\end{equation}
\begin{equation*}\begin{split}
R=&1592524800s^6(1-
   2 s-s^2)^4(1-s^2)^6 (1+s^2)^{10} (1 + 2 s -s^2)^{12}(-2835 (3031967863973 \\&+ 2143925036955\sqrt{2}) -
   990 (494121750080087 + 349396840213394\sqrt{2}) s\\& -
   36 (264097013248510342 + 186744788959135145\sqrt{2}) s^2 -
   36 (1887468190562069883\\& +
      1334641556820342334\sqrt{2}) s^3 + (91473639307849057536\\& +
      64681630654392396069\sqrt{2}) s^4 +
   6 (1033939717614918120928 \\&+
      731105785663612644769\sqrt{2}) s^5 + (89674919743321102482210\\& +
      63409743852861765304554\sqrt{2}) s^6 +
   6 (169513037149998265360064 \\&+
      119863818068290925008325 \sqrt{2}) s^7 + (10006126095391353411154527 \\&+
      7075399615458896897729482\sqrt{2}) s^8 +
   10 (8104987543069728084647223 \\&+
      5731091653137099645934904\sqrt{2}) s^9 +
   4 (137633401828503741973000623 \\&+
      97321511750707965418986469 \sqrt{2}) s^{10} + (3460794542183976146715930922\\& +
      2447151289071682737189213348 \sqrt{2}) s^{11} + (20668314075174741603761104223\\& +
      14614705038249426372214003364\sqrt{2}) s^{12} +
   2 (52438474590388672317874236492\\& +
      37079600977942295239659370631 \sqrt{2}) s^{13} + (392207152806294465505251165866 \\&+
      277332337379199269450917653786 \sqrt{2}) s^{14} + (861676195948774227993715759992\\& +
      609297081342406546436605709942 \sqrt{2}) s^{15} - (82709169427171204680364507426\\& +
      58484214568259835247263300487\sqrt{2}) s^{16} -
   8 (859317064411726826865506732551\\& +
      607628923434849286255795764873\sqrt{2}) s^{17} -
   4 (2963221875527768270897014277738\\& +
      2095314282346004603177512679185 \sqrt{2}) s^{18}\\& + (68135969004058954533141035965514 +
      48179405725486499611989334991320 \sqrt{2}) s^{19}\\& + (431257166833013944563734906365669 +
      304944867102922411738399492775765\sqrt{2}) s^{20}\\& +
   84 (10888444792580818055548594553776+
      7699293149409247357101383674411\sqrt{2}) s^{21}\\& -
   4 (141558998343223234019005315458339+
      100097327666468394727161624615559\sqrt{2}) s^{22}\\& -
   4 (1928324031568714651378566974620944 +
      1363530999047220272429435276458301\sqrt{2}) s^{23}\\& -
   2 (6265994871098688008249222536383309 +
      4430727464234009036394322559590834\sqrt{2}) s^{24}\\& +
   4 (8024321484941658643497407645143903 +
      5674052136423153516233390867435596\sqrt{2}) s^{25}
   \end{split}\end{equation*}
\begin{equation*}\begin{split}
& + 8 (22804167846315131553884458068857961\\& +
      16124981723445656450173223328696407\sqrt{2}) s^{26}\\& +
   4 (68957082018022061880225467228576605 \\&+
      48760020305780337100626585751556542\sqrt{2})s^{27}\\
&-2 (179122109008759510698238339957867105 +
      126658457940529824850022579834649264\sqrt{2}) s^{28}\\& -
   4 (544901907499494053979665206957422116 +
      385303833874377100174703821484811769\sqrt{2}) s^{29}\\& -
   4 (553746174130839028616519403815423659 +
      391557674784023040929516385035033643 \sqrt{2}) s^{30}\\&
   + (8583949559758434627953359230241959344 +
      6069768943068468431403184426131130348 \sqrt{2}) s^{31}\\& + (32551763897526059043342794151904010249 +
      23017572991524116362810082690825248637\sqrt{2}) s^{32}\\& +
   6 (4590958783730906581032175004775828945 +
      3246298088124068533878558739427807374\sqrt{2}) s^{33}\\& -
   4 (23316892051255241762357713028810541734 +
      16487532485637289453025514794708906195\sqrt{2}) s^{34}\\& -
   8 (34465659637331272162211079464195995507 +
      24370901647624426891754321315661625435 \sqrt{2}) s^{35}\\& - (2723250446710848541827911263024492894 +
      1925628857738536871192812826239675389\sqrt{2}) s^{36}\\& +
   2 (705213307040225347800486097601813094368 \\&+
      498661111591134187097108239264342367355\sqrt{2}) s^{37}\\&
      +
   2 (1408268502554507723228915929808546566233\\& +
      995796207887717220126708149713141425965\sqrt{2}) s^{38}\\& -
   2 (399810574331351134670250706336099547328\\& +
      282708768299786592411122476175966132089\sqrt{2}) s^{39}\\& -
   3 (4080629009669078909103719801524127061139\\& +
      2885440444243551510636026114362030566698\sqrt{2}) s^{40}\\& -
   2 (6704344098538994865913155724400021847445 \\&+
      4740687175484934199510433686666190760352\sqrt{2}) s^{41}\\& +
   12 (2838062956814256154989343568835039239249\\& +
      2006813562197704336062964082579372588035\sqrt{2}) s^{42}\\& +
   2 (55146666252281557103187501005312466461525 \\&+
      38994581666819619196799428839233983870858 \sqrt{2}) s^{43}\\& + (48520698688951069064430983502057417948015\\& +
      34309315070866522047293733097812955006508\sqrt{2}) s^{44}\\& -
   2 (137662711194989407452021833499901554937396\\& +
      97342236602502250151809027615892665533785\sqrt{2}) s^{45}\\& -
   2 (224920549525140898848123765835196520012555\\& +
      159042845797431837933389777387675571431747 \sqrt{2}) s^{46} \\&+ (348633316915199240520396333533279933107448\\& +
      246520982538296003016515988313057164094774 \sqrt{2}) s^{47}\\& + (1808057555754074019384020124549628754785460 \\&+
      1278489758449280059730907204845548646765691\sqrt{2}) s^{48}\\&
\end{split}\end{equation*}
\begin{equation*}\begin{split}
      & +
   4 (368923151311216172869052309346396830397527\\& +
      260868062028871733816051210840502460137720\sqrt{2}) s^{49}\\& -
   4 (559501961162572999897094631072157873480178\\&+
      395627630825227833323672837017168403428363\sqrt{2}) s^{50}\\&  -
   10 (553841577836044910024479322103243494018555\\& +
      391625135390924397779040984326275906530788 \sqrt{2}) s^{51}\\& - (2045276596487240170762272145281835657325635\\& +
      1446228950778269389094529602407987347133701\sqrt{2}) s^{52}\\& +
   176 (46967511418618196903008187033308902146992\\& +
      33211045819561518759124306024600747330475\sqrt{2}) s^{53}\\& +
   16 (1192497130261441072188951728666949847632771\\& +
      843222807353362958632880162004240375400535\sqrt{2}) s^{54}\\& +
   16 (1389359969660933778464225671898620543677008\\& +
      982425856056382041537158089473326045582557\sqrt{2}) s^{55}\\& -
   8 (838122919185892527274133489845351330483395\\& +
      592642399624210068027836298385595389329302\sqrt{2}) s^{56}\\& -
   16 (6091256618544973305563841416405315008672703\\& +
      4307168860920589408873343433257566309242292\sqrt{2}) s^{57}\\& -
   32 (5411734908250012834926087222398651552108325\\& +
      3826674451607542313167464249383951379956419\sqrt{2}) s^{58}\\& -
   16 (654266571292426610666004643859371152948533\\&
   +462636329264552157816871731797324913339526\sqrt{2}) s^{59}\\& +
   8 (50467895677347008102385082117331830823377673 \\&+
      35686191265667353638096273874322467387369512\sqrt{2}) s^{60}\\& +
   16 (31152421072480347888275764512370652430095428 \\&+
      22028088190729542029599930904462653536364497\sqrt{2}) s^{61}\\& -
   16 (13631042376246301069320861299717602964631181 \\&+
      9638602498885009812404037554930643760478749\sqrt{2}) s^{62}\\& -
   16 (57304288603146447630028894716165322434213388\\& +
      40520251062355731992988652346701301092345923\sqrt{2}) s^{63}\\& -
   2 (102117606690558265846057644987517551760622303\\& +
      72208052169433892332954062013174038023669357\sqrt{2}) s^{64}\\& +
   4 (275547160674870583863542603244430999547720575\\& +
      194841265849898816971328178664681423629911298\sqrt{2}) s^{65}\\
 \end{split}\end{equation*}
 \begin{equation*}\begin{split}
     &+
   8 (39096218401699640940314987864652688490277658\\&  +
      27645201150591964035520989634910580162359123\sqrt{2}) s^{66}\\&  -
   8 (248823455695407456938092904806022984034978333\\&  +
      175944752840490833496393411729123446628361934\sqrt{2}) s^{67}\\&  -
   2 (487355044425398274840976061693501454405013636\\&  +
      344612056758679404147551600451183335890224231\sqrt{2}) s^{68}\\&  +
   4 (926884481809837072763076995491688304690655904 \\& +
      655406302464309986221703608182601304670058659\sqrt{2}) s^{69}\\&  +
   4 (908910323347687626854601596159677041713868625\\&  +
      642696653129626948507300627237708879667556309\sqrt{2}) s^{70}\\&  -
   4 (1196861311364319796440101462110466161301221184\\&  +
      846308749405558922288041376080871031337748593\sqrt{2}) s^{71}\\&  -
   2 (4370642229792309375302233535077641266415288257 \\& +
      3090510758826440685284336345456724878746780454\sqrt{2}) s^{72}\\&  +
   4 (742178628988608027044648050282398599526634131\\&  +
      524799541409651412038857711303962167304898472\sqrt{2}) s^{73}\\&  +
   8 (1899031569965905511950288461135498336631260423\\&  +
      1342818100810172926296088091641041081535896541\sqrt{2}) s^{74} \\& +
   4 (1109034759240925026517029633415896620508287749 \\& +
      784205998830833511437172982642545978164991010\sqrt{2}) s^{75}\\&  -
   2 (9200827589669696556493179498987053117569031857\\&  +
      6505967581183244534882034362639750248751551564\sqrt{2}) s^{76}\\&  -
   4 (4123813675892972141831298193166214817475719668\\&  +
      2915976614573959549011839477736595062496717097\sqrt{2}) s^{77}\\&  +
   4 (3218327841078439770177877982623808637413244357\\&  +
      2275701440507850058731641571231093767568360605\sqrt{2}) s^{78} \\& +
   4 (6474662136774769935869206307406366275654187772\\&  +
      4578277502805641498093994934792147279406087115\sqrt{2}) s^{79}\\&  +
   2 (68257673563881129361815558434638545541803794\\&  +
      48265463844889811061007868654309911478706821\sqrt{2}) s^{80}\\&  -
   16 (1618769403618776813256690713318428482905353756 \\& +
      1144642822476229107838948466080593948419910075\sqrt{2}) s^{81}\\&  -
   40 (321779310488316287482684957574837967568316290\\&  +
      227532332491797870737606536154286728949815481\sqrt{2}) s^{82} \\
   \end{split}\end{equation*}
\begin{equation*}\begin{split}
& +
   4 (4244960055484698546625343255302246243830442701 \\& +
      3001640041099373320970938088742412509460168588\sqrt{2}) s^{83}\\&  +
   2 (9227766719396036865781224893491917577810200901\\&  +
      6525016422492055093151200471974450899460941913\sqrt{2}) s^{84}\\& -
   8 (667933797085487368669784515705909725074404016\\&  +
      472300517302704336693808970077219825535130707\sqrt{2}) s^{85}\\&  -
   56 (281471387639097128634866312657006445238699607\\&  +
      199030326909576345760982550911794336060680587\sqrt{2}) s^{86}\\& -
   8 (320871332305997796526009811090292964059323824 \\&+
      226890294962631919985093066515884987870221247 \sqrt{2}) s^{87} \\&+ (8997356032557396823349162765851876929920488388\\& +
      6362091463372797131467446249603766029197382696\sqrt{2}) s^{88}\\& +
   8 (584576628642992981824256468003903131231258549 \\&+
      413358098238478497395117328388637855005147828\sqrt{2}) s^{89}\\& -
   11492643374198789877538186162577948020400 (275807 +
      195025\sqrt{2}) s^{90}\\& -
   8 (397612129516086640926688362756920605286485649\\& +
      281154233065565647340161383243516692248450022\sqrt{2}) s^{91}\\& +
   4 (95237754882911966890847578071256580878811397\\& +
      67343262313143685989277571181282255151665376\sqrt{2}) s^{92}\\& +
   8 (150342788992712137681474358048614714997516724\\& +
      106308405600736322731062071266388377592077853\sqrt{2}) s^{93}\\& +
   56 (3572718830655221336462828192295434024426993 \\&+
      2526293711120965836357216682676637952715537\sqrt{2}) s^{94}\\& -
   8 (26510508588920727843436266654138745340861084\\& +
      18745760392825656576340714118083054813088807\sqrt{2}) s^{95}\\& -
   2 (91191390010770455781715671298694589254192499\\& +
      64482050219554461875872847736366611015994763\sqrt{2}) s^{96}\\& -
   4 (25807523343524341570223823677542497462221801\\& +
      18248674781565773288237601425107891655836062\sqrt{2}) s^{97}\\& +
   40 (4761681166060659774091307270835294525332010\\& +
      3367017040874203847830436742240909553614481\sqrt{2}) s^{98}\\& +
   16 (19075702446399399682438096865898832552016256\\& +
      13488558563269494185124875807749837985342825\sqrt{2}) s^{99}\\& -
   2 (31901810647619340788942353908692055069066506\\& +
      22557986640743595350657526883895403128862221\sqrt{2}) s^{100}
\end{split}\end{equation*}
\begin{equation*}\begin{split}
& -
   4 (90368921663392471024157215307960925677338272\\& +
      63900477346793441571706882120990031973540185\sqrt{2}) s^{101}\\& -
   4 (37899294091968153740310010089265403059024643\\& +
      26798847839655897612682683880340181619013055\sqrt{2}) s^{102}\\& +
   4 (46363104284733046710386902715860613939397568\\& +
      32783665455760343504819402655859664945781603\sqrt{2}) s^{103}\\& +
   2 (102212443221093411567538514007974453436955843 \\&+
      72275111680516800128662863382861132549431514\sqrt{2}) s^{104}\\& +
   4 (3157717134269744955491278169762441617977751\\& +
      2232843193556552235488407081094244749658160\sqrt{2}) s^{105}\\& -
   8 (11712750277284768467640043962762153390578377\\& +
      8282165138586416534901799925773252450652091\sqrt{2}) s^{106}\\& -
   4 (15473763779809266046119912983559576204117031 \\&+
      10941603302631389286079734213894497761215678\sqrt{2}) s^{107}\\& -
   2 (1294459770209478462946035042542231714123557\\& +
      915321301802033194723814955044172519824596\sqrt{2}) s^{108}\\& +
   4 (5275238940487828878209852725522666140996284\\& +
      3730157232761015667498781260634880665639507\sqrt{2}) s^{109}\\& +
   4 (4181827898473553932763745044629263222082425\\& +
      2956998868990143455760229333624832385197441\sqrt{2}) s^{110}\\& +
   4 (1068452965093402958576780291861034754682796\\& +
      755510332688474789351802502011931234057559\sqrt{2}) s^{111}\\& -
   2 (1970481324962626038595419510241475860125336 \\&+
      1393340709347639016829391045624087325860669\sqrt{2}) s^{112}\\& -
   8 (484003367310741962829096806259299729140367\\& +
      342242061986075321108432902792307347103484\sqrt{2}) s^{113}\\& -
   8 (29423782458836004816211136106712338561242\\& +
      20805755923090574928441470851368470891923\sqrt{2}) s^{114}\\& +
   4 (290430935189519983720631548961548502888325 \\&+
      205365682458181007910716345346056180867548\sqrt{2}) s^{115}\\& +
   2 (131858617655328255177195873702998520305297\\& +
      93238122227347888246496257331426176642907\sqrt{2}) s^{116}\\& -
   16 (24148954187053067521535647105186641190512\\& +
      17075888997891457567152158368688130984223\sqrt{2}) s^{117}\\
\end{split}\end{equation*}
\begin{equation*}\begin{split}
& -
   16 (12864202401983539972219820214195305762981\\& +
      9096364816352756085461063359316094360401\sqrt{2}) s^{118}\\& +
   16 (2470347782971620035247922973732236446672\\& +
      1746799524439161603082572668557082081797\sqrt{2}) s^{119}\\& +
   8 (7555994235812386878179738583739155333573 \\&+
      5342894997312472330827843363936638264938\sqrt{2}) s^{120}\\& +
   16 (1195906788113778530047687510995195324233\\& +
      845633802583155655810636400384706328924\sqrt{2}) s^{121} \\&+
   32 (76508457806988302640276334278884450875\\& +54099624180958170173605360397664822669\sqrt{2}) s^{122}\\& -
   16 (88566114665640051991720745962384800397 \\&+
      62625671952672073884076185536790518342\sqrt{2}) s^{123}\\& -
   8 (152059510122515045394283710769333587295\\& +
      107522314646935704360143466070567337448\sqrt{2}) s^{124}\\& +
   16 (35898682928259937531415352787101773092\\& +
      25384195676815597756452158641621302857\sqrt{2}) s^{125}\\& +
   16 (61567350846926717487214926371393479771\\& +
      43534696826003165508765667238140547115\sqrt{2}) s^{126}\\& +
   176 (2345942914819983764265346736428000508\\& +
      1658831925051651054003867933959430175 \sqrt{2}) s^{127}\\& + (43336238666833301134476841356523131165\\& +
      30643338726465461850323826934498053451\sqrt{2}) s^{128}\\& -
   10 (8514922379064982580571620281178297345\\& +
      6020956781386799275047229176392655038\sqrt{2}) s^{129}\\& -
   4 (24944580286048060859012466990956409278\\& +
      17638484474552141525324491504329856837\sqrt{2}) s^{130}\\& -
   4 (7756885731392681080587806510228784027\\& +
      5484948216226139333055212374004987830 \sqrt{2}) s^{131}\\& + (19632724820351354687032168344701759160 \\&+
      13882441257071862258897678762822350809\sqrt{2}) s^{132}\\& +
   2 (6897471813289108474323067357966105376\\& +
      4877248282036420842001750779372863287\sqrt{2}) s^{133}\\& -
   2 (1436528340390501798420589937894727955 \\&+
      1015779976235304303879435979517941503\sqrt{2}) s^{134}\\
\end{split}\end{equation*}
\begin{equation*}\begin{split}
& -
   2 (2378867322816064042377790409362503104\\& +
      1682112575681988416897069011278079885 \sqrt{2}) s^{135}\\& - (1051685161934051462239512031016213885\\& +
      743653484163891394030419291746341758\sqrt{2}) s^{136}\\& +
   2 (133489178395812978322740467412430375\\& +
      94390846949791660889965668749259608\sqrt{2}) s^{137}\\& +
   12 (7038115768340804615565444158141249 +
      4976712577227828513944311897282315\sqrt{2}) s^{138}\\& -
   2 (12353221467874782426385070241438655 +
      8735015509197805362787802889886102\sqrt{2}) s^{139}\\& -
   3 (1771302932839779314210265803822239 +
      1252519281163392964114853802499652\sqrt{2}) s^{140}\\& -
   2 (1866929568012947238721152811750372 +
      1320116699312849579758223301034389\sqrt{2}) s^{141}\\& -
   2 (3427145836469351311520472714240767 +
      2423351515777496608841389854855015\sqrt{2}) s^{142}\\& -
   2 (1569872633731246448520750011522868 +
      1110070862578362537179249562631345 \sqrt{2}) s^{143}\\& - (234918115589129498294927387590194 +
      166112205213674842615434410404211\sqrt{2}) s^{144}\\& +
   8 (25885878453745883749681184432507 +
      18304240380003017000855558894615\sqrt{2}) s^{145}\\& +
   4 (10287897841798495593016472656766 +
      7274533956586375248646981705595\sqrt{2}) s^{146}\\& -
   6 (3318108772505414269081326588245 + 2346278551464287614555275096876 \sqrt{2}) s^{147}\\& - (17408085980909913263302104086351 +
      12309224351534607411061815723287\sqrt{2}) s^{148}\\& -
   4 (2370888963624368896702542190736 +
      1676481637670569240550922522313\sqrt{2}) s^{149}\\& -
   4 (999354344596542874261475546259 +
      706652807556179007871323113207\sqrt{2}) s^{150}\\& -
   4 (290211198586825415278149869584 +
      205207773919393369075210449869\sqrt{2}) s^{151}\\& -
   2 (93810879567655075922368149405 +
      66335141609362506884646423986\sqrt{2}) s^{152}\\& +
   4 (613520691322167716862676495 +
      433504144715787866048613292\sqrt{2}) s^{153}\\& +
   8 (1670196414487280276187720361 +
      1181113199078069904287314743 \sqrt{2}) s^{154}\\& + (6870725049042884522123425588 +
      4858187093109324119993486584\sqrt{2}) s^{155}\\& +
   2 (1194988759874814120087945391 +
      844955532658988452313343984\sqrt{2}) s^{156}\\&  +
   4 (140663378908059751316111644 +
      99472991493151054068083799\sqrt{2}) s^{157}\\&  +
   4 (30556732439122570835137861 +
      21606214785229661073416709\sqrt{2}) s^{158}\\&  +
   84 (244342106371301578358524 +
      172725353379717653988511 \sqrt{2}) s^{159}\\&  + (3657065035674579079828669 +
      2587939869207976723772585 \sqrt{2}) s^{160}\\&  + (543547892383411937095486 +
      384029720430348123901220 \sqrt{2}) s^{161}\\&  + (86569277328988696951048 +
      61158633570988338739940\sqrt{2}) s^{162}\\& +
   8 (3907530873189777686151 +
      2767035484447202504777 \sqrt{2}) s^{163} \\&+ (6896091785017329236674 +
      4875888851842986202087\sqrt{2}) s^{164}
   \end{split}\end{equation*}
\begin{equation}\begin{split}\label{R}
&+
   2 (557751530937641790304 + 392387454235740346071\sqrt{2}) s^{165}\\& +
   2 (114039933336436795533 +
      81549854107382219057\sqrt{2}) s^{166} + (39106254067306623616\\& +
      27164853713716810662\sqrt{2}) s^{167} + (1811767716088461523 +
      1377174765913968586\sqrt{2}) s^{168}\\& + (64425268650110478 +
      29470574355106048\sqrt{2}) s^{169} +4 (5871290256731423\\& + 4791317270007981\sqrt{2}) s^{170} +
   10 (1022614051817477 +
      685427229162954\sqrt{2}) s^{171}\\& - (744861464245573 +
      480190440148932\sqrt{2}) s^{172} +
   6 (3612166972436\\& + 1766330498225\sqrt{2}) s^{173} +
   6 (898430050835 + 704750684491\sqrt{2}) s^{174} -
   6 (52998899228\\& + 42281392931\sqrt{2}) s^{175} + (4054225836 +
      3291918831\sqrt{2}) s^{176} +
   36 (21356183 \\&+ 23873616\sqrt{2}) s^{177} -
   36 (56842 + 1267655\sqrt{2}) s^{178} +
   990 (-1613 + 1156\sqrt{2}) s^{179}\\& + 2835 (27 + 5\sqrt{2}) s^{180}).
      \end{split}\end{equation}
\end{document}